\documentclass[a4paper,12pt,centertags,final]{amsart}
\usepackage[T1]{fontenc}
\usepackage[utf8]{inputenc}
\usepackage{mathrsfs}
\usepackage{bbm}
\usepackage{eulervm}

\usepackage[a4paper,centering]{geometry}
\usepackage[pdfdisplaydoctitle,colorlinks,urlcolor=blue,linkcolor=blue,citecolor=blue]{hyperref}
\usepackage{mathtools}
\usepackage{xcolor}
\usepackage{mathscinet}
\usepackage{xspace}
\newtheorem{theorem}{Theorem}[section]
\newtheorem{lemma}[theorem]{Lemma}
\newtheorem{proposition}[theorem]{Proposition}
\newtheorem{corollary}[theorem]{Corollary}
\theoremstyle{definition}
\newtheorem{definition}[theorem]{Definition}

\theoremstyle{remark}
\newtheorem{remark}[theorem]{Remark}
\newtheorem{example}[theorem]{Example}
\numberwithin{equation}{section}
\newcounter{mrConstantsCounter}
\newcommand{\mrbareref}[1]{
  \ensuremath{c_\text{#1}}
}
\newcommand{\mconst}{
  \stepcounter{mrConstantsCounter}
  \mrbareref{\themrConstantsCounter}
}
\makeatletter
\newcommand{\mcdef}[1]{
  \stepcounter{mrConstantsCounter}
  \protected@write\@auxout{}{
    \string\newlabel{#1}{
      {\themrConstantsCounter}{}{}{constant.\themrConstantsCounter}{}
    }
  }
  \hypertarget{constant.\themrConstantsCounter}{\mrbareref{\ref{#1}}}
}
\makeatother
\newcommand{\mcref}[1]{
  \mrbareref{\ref{#1}}
}
\def\MRnum#1 #2\empty{#1}
\renewcommand{\MRhref}[2]{%
  \href{http://www.ams.org/mathscinet-getitem?mr=#1}{#2}
}
\renewcommand{\MR}[1]{
  \relax\ifhmode\unskip\space\fi
  \MRhref{\MRnum#1\empty}{\texttt{\Tiny[MR\MRnum#1\empty]}}
}
\newcommand{\arxiv}[1]{\href{http://arxiv.org/abs/#1}{arXiv: #1}}
\DeclareMathOperator{\supp}{Supp}
\DeclareMathOperator{\Span}{span}
\DeclareMathOperator{\Tr}{Tr}
\newcommand{\Div}[1][F]{\nabla_#1\cdot}
\newcommand{\R}{\mathbf{R}}
\newcommand{\Z}{\mathbf{Z}}
\newcommand{\E}{\mathbb{E}}
\newcommand{\Prob}{\mathbb{P}}
\newcommand{\Dc}{\mathcal{D}}
\newcommand{\Fc}{\mathcal{F}}
\newcommand{\Gc}{\mathcal{G}}
\newcommand{\Kc}{\mathcal{K}}
\newcommand{\Mac}{\mathcal{M}}
\newcommand{\Sc}{\mathcal{S}}
\newcommand{\As}{\mathscr{A}}
\newcommand{\e}{\operatorname{e}}
\newcommand{\im}{\mathrm{i}}
\newcommand{\uno}{\mathbbm{1}}
\newcommand{\cov}{\Sc}
\newcommand{\ff}{f_F}
\newcommand{\fn}{f_N}
\newcommand{\ffn}{f_{F,N}}
\newcommand{\gn}{G_N}
\newcommand{\gfn}{G_{F,N}}
\newcommand{\Fperp}{{F^{\perp_N}}}
\newcommand{\pker}{\wp^F}
\newcommand{\basis}{\mathcal{E}}
\newcommand{\eqdef}{\vcentcolon=}
\newcommand{\scalar}[1]{\langle #1 \rangle}
\newcommand{\field}[1][F]{\mathscr{#1}}
\newcommand{\loc}{{\textup{\tiny loc}}}
\newcommand{\memo}[1]{
  \ensuremath{
    \framebox{\tiny\text{\kern-2pt\textsf{#1}}\kern-2pt}
  }
  \xspace
}

\hypersetup{%
  pdftitle={Hoelder regularity of the densities for the Navier--Stokes equations with noise},
  pdfauthor={M. Romito},
  pdfcreator={M. Romito}
}
\begin{document}
  \title[H\"older regularity of the densities]
    {H\"older regularity of the densities for the Navier--Stokes equations with noise}
  \author[M. Romito]{Marco Romito}
    \address{Dipartimento di Matematica, Universit\`a di Pisa, Largo Bruno Pontecorvo 5, I--56127 Pisa, Italia}
    \email{\href{mailto:romito@dm.unipi.it}{romito@dm.unipi.it}}
    \urladdr{\url{http://www.dm.unipi.it/pages/romito}}
  \subjclass[2010]{Primary 76M35; Secondary 60H15, 60G30, 35Q30}
  \keywords{Density of laws, Navier-Stokes equations, stochastic partial
    differential equations, Besov spaces, Fokker-Planck equation.}
  \date{July 9, 2015}
  \begin{abstract}
    We prove that the densities of the finite dimensional projections of
    weak solutions of the Navier--Stokes equations driven by Gaussian noise
    are bounded and H\"older continuous, thus improving the results of
    Debussche and Romito \cite{DebRom2014}.
    
    The proof is based on analytical estimates on a conditioned
    Fokker--Planck equation solved by the density, that has a ``non--local''
    term that takes into account the influence of the rest of the
    infinite dimensional dynamics over the finite subspace under
    observation.
  \end{abstract}
\maketitle
%%
%%
%%
%%%%%%%%%%%%%%%%%%%%%%%%%%%%%%%%%%%%%%%%%%%%%%%%%%%%%%%%%%%%%%%%%%%%%%%%%%%%
\section{Introduction}

In this paper we improve the results of \cite{DebRom2014} for the law
of the solutions of the Navier--Stokes equations with Gaussian noise
in dimension three. We consider the problem
\begin{equation}\label{e:nse}
  \begin{cases}
    \dot u
      - \nu\Delta u
      + (u\cdot\nabla)u
      + \nabla p
      = \dot\eta,\\
    \Div[{{}}] u = 0,
  \end{cases}
\end{equation}
on the torus with periodic boundary conditions and driven by a Gaussian
noise $\dot\eta$. In the equations above $u$ is the velocity, $p$ the
pressure and $\nu$ the viscosity of an incompressible fluid. It is known
that the above problem admits global weak solutions, as well as
unique local strong solutions, as in the deterministic case.
Nevertheless the presence of noise allows to prove additional properties,
such as continuous dependence on initial data
\cite{DapDeb2003,DebOda2006,FlaRom2006,FlaRom2007,FlaRom2008}, as well
as convergence to equilibrium \cite{Oda2007,Rom2008}.
See also the recent surveys \cite{FlaRom2008,Deb2013} for a general
introduction to the problem.
\medskip

Our interest in the existence of densities stems from a series
of mathematical motivations. The first and foremost is the
investigation of the regularity properties of solutions of the
Navier--Stokes equations. We study here the regularity properties
of densities associated to the probabilistic distribution of the
solution, as existence and regularity of densities can be seen as
a different type of regularity.

Our results concerns the existence of densities for suitable finite
dimensional projections of the solutions of \eqref{e:nse},
and one reason for this is that in infinite dimension
there is no standard reference measure (as is the Lebesgue measure
in finite dimension), any choice should be necessary tailored to the
problem at hand, and in our case we do not know enough of
the problem.

An interesting difficulty in proving regularity of the densities
emerges as a by--product of the more general and fundamental
problem of proving uniqueness and regularity of solutions of the
Navier--Stokes equations. Indeed, a classical tool is the
Malliavin calculus, and it is easy to be convinced that
it is not available here. Indeed, formally, the
equation satisfied by the Malliavin derivative is the linearisation
of \eqref{e:nse} and thus, estimates on the linearized equation
are as good for the density as for uniqueness. We remark that
in the case of the two dimensional Navier-Stokes equation, existence
and smoothness of densities for the finite dimensional projections
of the solutions are proved in \cite{MatPar2006} with Malliavin 
calculus.

This settles the need of methods to prove existence and regularity
of the density that do not rely on this calculus, as done
in~\cite{DebRom2014}. For other works in this direction, see for instance
\cite{Dem2011,BalCar2014p,KohTan2012,HayKohYuk2013,HayKohYuk2014}.
\medskip

Existence of densities and their regularity in Besov spaces has been
proved in~\cite{DebRom2014} (see also \cite{Rom2013a,Rom2014a,Rom2014b}),
by extending and generalising a one dimensional idea from~\cite{FouPri2010}.
Time regularity of the density has been proved in~\cite{Rom2014pb}.
The method introduced in~\cite{DebRom2014} is simple but effective and
has been already used in other problems (see for
instance~\cite{DebFou2013,Fou2015,SanSus2015,SanSus2015p}) to prove
the existence of densities.

The results of~\cite{DebRom2014} ensure that the density of the
projection of solution at some fixed time on some finite dimensional
sub--space is in the Besov space $B^{1-}_{1,\infty}$. Roughly speaking,
this says that densities have (almost) integrable
derivative (see Section~\ref{s:notations} for a short introduction
to Besov spaces).
\medskip

In this paper we show a proof of H\"older regularity of densities of
finite dimensional projections that is completely analytic and, unlike
\cite{DebRom2014}, does not rely on probabilistic ideas. In fact
we follow a classical approach to existence and regularity
of densities, namely the Fokker--Planck equation.
The Fokker--Planck equation describes the
evolution of the density of the It\=o process solution of a stochastic
equation. Here we only look at a partial information on the solution
(namely, a finite dimensional projection), thus we derive in
Section~\ref{s:formulation} a Fokker--Planck equation with
a ``non--local'' term that takes into account the effect of the
dynamics outside the finite--dimensional space under observation.
The non--local term is indeed a conditional expectation and its
regularity is known only in terms of the unknown density itself.
This makes our Fokker--Planck equation slightly non--standard.
We re--derive in this framework the results of~\cite{DebRom2014}
(see Proposition~\ref{p:bbesov}), we then prove the core
result of the paper, namely boundedness of the densities,
in Proposition~\ref{p:bounded}, and finally the H\"older regularity.

Our proof of boundedness requires that, at least at the level of
the Galerkin approximations we work with, we already know that the
densities are bounded, possibly with bounds depending on the
approximation (and so useless for the limiting problem). We derive
these bounds on the approximations in the appendix by standard results
for hypo--elliptic diffusions and to do so we need to assume that the
noise is ``sufficiently non--degenerate'' (see Section~\ref{s:assumptions}).
With periodic boundary conditions the problem has been already
thoroughly analyzed in~\cite{Rom2004} and this is the reason we
mainly focus on the problem on the torus. There is in principle
no limitation for the problem with Dirichlet boundary conditions,
once smoothness of the densities at the level of approximations
is settled.
\medskip

We believe that this is just a technical requirement (indeed it is not
needed in~\cite{DebRom2014}) that depends on the approach we have
followed. Moreover, there is an inherent limitation in the
Fokker--Planck approach, in that it is less flexible than the method
developed in~\cite{DebRom2014} and thus cannot be used, in general,
to evaluate the density of quantities that do not have an associated
evolution equation, as for instance in~\cite{SanSus2015,SanSus2015p},
as well as for nonlinear functions of the solution of a diffusion
process. The development of a probabilistic proof of the results
of this paper via a generalization of~\cite{DebRom2014}  is
currently the subject of an on--going work.
%%
%%
%%
%%%%%%%%%%%%%%%%%%%%%%%%%%%%%%%%%%%%%%%%%%%%%%%%%%%%%%%%%%%%%%%%%%%%%%%%%%%%
\section{Main result}
%%
%%%%%%%%%%%%%%%%%%%%%%%%%%%%%%%%%%%%%%%%%
\subsection{Notations}\label{s:notations}

We shall use the following notations. If $K$ is an Hilbert space,
and $F\subset K$ a subspace, we denote by $\pi_F:K\to K$
the orthogonal projection of $K$ onto $F$, and by
$\Span[x_1,\dots,x_n]$ the subspace of $K$ spanned by
its elements $x_1,\dots,x_n$. Given a linear operator
$\mathcal{Q}:K\to K'$, we denote by $\mathcal{Q}^\star$
its adjoint.

We recall the definition of Besov spaces. Given $f:\R^d\to\R$,
define
\[
  \begin{gathered}
    (\Delta_h^1f)(x)
      \eqdef f(x+h)-f(x),\\
    (\Delta_h^nf)(x)
      \eqdef\Delta_h^1(\Delta_h^{n-1}f)(x)
      = \sum_{j=0}^n (-1)^{n-j}\binom{n}{j} f(x+jh),
  \end{gathered}
\]
and, for $s>0$, $1\leq p\leq\infty$, $1\leq q<\infty$,
\[
  [f]_{B_{p,q}^s}
    \eqdef \Bigl(\int_{\{|h|\leq 1\}}\frac{\|\Delta_h^n f\|_{L^p}^q}{|h|^{sq}}
          \frac{dh}{|h|^d}\Bigr)^{\frac1q},
\]
and for $q=\infty$,
\[
  [f]_{B_{p,\infty}^s}
    \eqdef \sup_{|h|\leq 1}\frac{\|\Delta_h^n f\|_{L^p}}{|h|^s},
\]
where $n$ is any integer strictly larger than $s$ (the above
semi--norms are independent of the choice of $n$, as long as
$n>s$).
Given $s>0$, $1\leq p\leq\infty$ and $1\leq q\leq\infty$,
define
\[
  B_{p,q}^s(\R^d)
    \eqdef \{f: \|f\|_{L^p} + [f]_{B_{p,q}^s}<\infty\}.
\]
This is a Banach space when endowed with the norm
$\|f\|_{B_{p,q}^s} := \|f\|_{L^p} + [f]_{B_{p,q}^s}$.

When in particular $p=q=\infty$ and $s\in(0,1)$,
the Besov space $B_{\infty,\infty}^s(\R^d)$
coincides with the H\"older space $C^s_b(\R^d)$,
and in that case we will denote by $\|\cdot\|_{C^s_b}$
and $[\cdot]_{C^s_b}$ the corresponding norm and
semi--norm. Notice that a more general definition of Besov
spaces, that includes also the case $s\leq0$, is based
on the Littlewood--Paley decomposition. We refer
to~\cite{Tri1983,Tri1992} for more details on the definitions
we have given and the connection with the general definition.

For a Hilbert space $K$ and a finite dimensional sub--space
$F$ of $K$, we shall denote by $L^p(F)$, $C^s_b(F)$
and $B^s_{p,q}(F)$ the Lebesgue, H\"older and
Besov spaces, respectively, on $F$, when $F$ is
identified with a Euclidean space.

%%
%%%%%%%%%%%%%%%%%%%%%%%%%%%%%%%%%%%%%%%%%%%%%%%%%%%%%
\subsection{The Navier--Stokes equations}\label{s:nse}

Let $H$ be the standard space of periodic square summable divergence
free vector fields, defined as the closure of periodic divergence free
smooth vector fields with zero spatial mean,
with inner product $\scalar{\cdot,\cdot}_H$ and norm $\|\cdot\|_H$.
Define likewise $V$ as the closure of the same space of
functions with respect to the $H^1$ norm.
Let $\Pi_L$ be the Leray projector, and let $A=-\Pi_L\Delta$ be the
Stokes operator.

In view of Section~\ref{s:assumptions} we specify in details
an orthonormal basis of $H$. Let $\Z^3_\star=\Z^3\setminus\{0\}$
and consider for every $k\in\Z^3_\star$ an orthonormal basis $x_k^1,x_k^2$
of the subspace $k^\perp$ of $\R^3$ orthogonal to the vector $k$. Choose
moreover $x_k^1,x_k^2$ so that $x_{-k}^i=x_k^i$, $i=1,2$. An orthonormal
basis of $H$ is given by $\basis=(e_k^i)_{i=1,2,k\in\Z^3_\star}$,
where $e_k^i=x_k^i\e^{\im k \cdot x}$. Clearly $\basis$ is a basis
of eigenvectors of the Stokes operator $A$, and different choices of
$(x_k^i)_{i=1,2, k\in\Z^3_\star}$ yield different bases. 

Define the bi--linear operator $B:V\times V\to V'$
as $B(u,v) = \Pi_L\left( u\cdot\nabla v\right)$, $u,v\in V$,
and recall that $\scalar{u_1, B(u_2,u_3)}=-\scalar{u_3, B(u_2,u_1)}$.
We will use the shorthand $B(u)$ for $B(u,u)$.
We refer to Temam \cite{Tem1995} for a detailed account of all
the above definitions.

Let $\cov:H\to H$ be a Hilbert--Schmidt operator, then with
the above notations, we can consider the following Navier--Stokes
equations 
\begin{equation}\label{e:nseabs}
  du + (\nu Au + B(u))\,dt = \cov\,dW,
\end{equation}
with initial condition $u(0)=x\in H$, where $W$ is a cylindrical
Wiener process (see \cite{DapZab1992} for further details) in $H$.
It is well--known \cite{Fla2008}
that for every $x\in H$ there exists a martingale solution of this equation,
that is a filtered probability space $(\smash{\widetilde\Omega},
\smash{\widetilde{\field}},\smash{\widetilde{\Prob}},
(\smash{\widetilde{\field}}_t)_{t\geq 0})$,
a cylindrical Wiener process $\smash{\widetilde W}$ and a process $u$
with trajectories
in  $C([0,\infty);D(A)')\cap L^\infty_\loc([0,\infty),H)\cap
L^2_\loc([0,\infty);V)$
adapted to $(\smash{\widetilde{\field}}_t)_{t\geq 0}$ such that the above
equation is satisfied with $\smash{\widetilde W}$ replacing $W$.
%%
%%%%%%%%%%%%%%%%%%%%%%%%%%%%%%%%%%%%%%
\subsubsection{Galerkin approximations}

Given an integer $N\geq 1$, denote by $H_N$ the sub--space
$H_N = \Span[e_k^i:\ i=1,2,\ |k|\leq N]$ and denote by $\pi_N = \pi_{H_N}$
the projection onto $H_N$. It is standard (see for instance
\cite{Fla2008}) to verify that the problem
\begin{equation}\label{e:galerkin}
  du^N + \bigl(\nu Au^N + B^N(u^N))\,dt = \pi_N\cov\,dW,
\end{equation}
where $B^N(\cdot) = \pi_N B(\pi_N\cdot)$, admits a unique strong
solution $u^N$ for every initial condition $x^N\in H_N$.
The proof is a based on the simple fact that 
in finite dimension all norms are equivalent, thus
given any finite dimensional sub--space $F$ of $H$, there is
$\mcdef{cc:finite}>0$ such that
\[
  \|\pi_FAx\|_H
    \leq\mcref{cc:finite}\|x\|_H,
      \qquad
  \|\pi_FB(x_1,x_2)\|_H
    \leq\mcref{cc:finite}\|x_1\|_H\|x_2\|_H.
\]
If $x\in H$, $x^N = \pi_N x$ and $\Prob^N_x$ is the distribution
of the solution of the problem above with initial condition $x^N$,
then any limit point of $(\Prob^N_x)_{N\geq1}$ is a solution
of the martingale problem associated to \eqref{e:nseabs} with initial
condition $x$. In the rest of the paper we will consider only
solutions of \eqref{e:nseabs} of this type, as specified by
the following definition.
\begin{definition}\label{d:sol}
  A solution of \eqref{e:nseabs} with initial condition $x\in H$
  is any process $u$ with $u(0)=x$ and with trajectories in
  $C([0,\infty);D(A)')\cap L^\infty_\loc([0,\infty),H)\cap
  L^2_\loc([0,\infty);V)$, such that there are a sequence of integers
  $N_k\uparrow\infty$ and a sequence $x_k\in H_{N_k}$ such that
  $x_k\to u(0)$, and $u$ is a limit point, in distributions,
  of $(u^{N_k})_{k\geq1}$, where $u^N$ is solution of \eqref{e:galerkin}.
\end{definition}
%%
%%%%%%%%%%%%%%%%%%%%%%%%%%%%%%%%%%%%%%%%%%%%%
\subsection{Assumptions}\label{s:assumptions}

Consider again the Hilbert--Schmidt operator $\cov:H\to H$.
We will assume
\begin{equation}\label{a:diagonal}
  \colorbox{lightgray}{
    \begin{minipage}{.8\linewidth}
      $\cov$ is diagonal in a basis $\basis$,
    \end{minipage}
  }
\end{equation}
where bases $\basis$ have been defined in Section~\ref{s:nse}.
Under this assumption the operators $A$, $\cov$ commute, $\basis$
is also a orthonormal basis of eigenvectors of the covariance
$\cov\cov^\star$, and the noise $\cov\dot W$ is homogeneous in space.
This assumption is taken for the sake of simplicity
(See Remark~\ref{r:nostokes} below), in view of applying the
results of~\cite{Rom2004}.

We shall also need the following global non--degeneracy condition,
\begin{equation}\label{a:nondegenerate}
  \colorbox{lightgray}{
    \begin{minipage}{.8\linewidth}
      let $\Kc = \{k: \scalar{\cov e_k^i, e_k^i}>0, i=1,2\}$,
      then $\Kc$ is an algebraic system of generators of the
      group $(\Z^3,+)$.
    \end{minipage}
  }
\end{equation}
This assumption has been introduced in~\cite{Rom2004} to ensure
that Galerkin approximations \eqref{e:galerkin}, with
$N$ large enough, are hypo--elliptic diffusions. We will use
this fact to deduce, see Theorem~\ref{t:schwartz}, that
the solution of \eqref{e:galerkin} has a smooth density
with respect to the Lebesgue measure on $H_N$. Notice
that the assumption essentially requires that the noise
is stirring all directions in $H$, albeit indirectly.
We believe this assumption is technical and depends on
the way we have proved our results, namely to ensure that
the computation in the next section are rigorous. As already
mentioned in the introduction, a probabilistic proof of
our main results would definitely get rid of this assumption.

We will consider densities for the projections of solutions
of \eqref{e:nseabs} over finite dimensional sub--spaces
of $H$. Given a finite dimensional subspace $F$ of $H$,
we consider the following conditions,
\begin{equation}\label{a:finitespan}
  \colorbox{lightgray}{
    \begin{minipage}{.8\linewidth}
      $F$ is the span of a finite subset of $\basis$,
    \end{minipage}
  }
\end{equation}
where the basis $\basis$ is the same of assumption \eqref{a:diagonal},
and that the noise is non--degenerate on $F$, namely,
\begin{equation}\label{a:Fnondegenerate}
  \colorbox{lightgray}{
    \begin{minipage}{.8\linewidth}
      $\pi_F\cov\cov^\star\pi_F$ is a non--singular matrix.
    \end{minipage}
  }
\end{equation}
This condition has been introduced in~\cite{DebRom2014,Rom2014pb},
and amounts to say that the covariance has full range in $F$ or,
in different words, that the noise is \emph{directly} stirring all
directions of $F$.
\begin{remark}\label{r:nostokes}
  In general, there is nothing special with the bases $\basis$
  provided by the eigenvectors of the Stokes operator and our
  results would work when applied to Galerkin approximations
  generated by any (smooth enough) orthonormal basis of $H$.
  On the other hand we are using the results of~\cite{Rom2004}
  and the setting with the bases $\basis$ is the most suitable.
  
  One could rather work with a general basis of eigenvectors
  and assume that the spectral Galerkin approximations are
  hypo--elliptic diffusions, thus ensuring the conclusions of
  Theorems~\ref{t:schwartz} and~\ref{t:malliavin}.
  We have preferred to proceed with the explicit version
  of the assumptions. Similar considerations apply for
  the problem on a bounded domain with Dirichlet boundary
  conditions.
\end{remark}
\begin{remark}
  The two non--degeneracy assumptions given above are, in a way,
  independent. An easy example where assumption \eqref{a:nondegenerate}
  holds while assumption \eqref{a:Fnondegenerate} does not is provided
  in~\cite{RomXu2011}, where all but the ``low modes'' are forced
  by the noise and \eqref{a:nondegenerate} holds. But if $F$ contains
  low modes components, \eqref{a:Fnondegenerate} is not true.
  
  On the other hand, fix $k_0\in\Z^3_\star$ and set
  \[
    \cov
      = \sum_{n=1}^\infty \sigma_n^1\scalar{\cdot,e_{nk_0}^1}e_{nk_0}^1
          + \sigma_n^2\scalar{\cdot,e_{nk_0}^2}e_{nk_0}^2,
  \]
  so that the set $\Kc$ introduced in assumption \eqref{a:nondegenerate}
  is $\Kc=\{nk_0:n\in\Z,n\neq0\}$. It is easy to check that, formally,
  the Navier--Stokes dynamics \eqref{e:nseabs} is closed in the subspace
  $\Span[e_k^i: i=1,2,k\in\Kc]$, and in particular is not hypo--elliptic.
  If $F$ is spanned by elements of $\Kc$, then \eqref{a:Fnondegenerate}
  holds.
  
  We do believe (and this is the subject of a work in progress) that also
  in this case the projections on $F$  have H\"older densities. Notice
  that Proposition~\ref{p:bbesov} below only ensures that these densities
  are in $B^1_{1,\infty}$.
\end{remark}
%%
%%%%%%%%%%%%%%%%%%%%%%%%
\subsection{Main result}

The main result of the paper is as follows.
\begin{theorem}\label{t:main}
  Consider equation \eqref{e:nseabs} and assume $\cov$
  satisfies assumptions \eqref{a:diagonal} and
  \eqref{a:nondegenerate}. Let $u$ be a solution of \eqref{e:nseabs}
  as in Definition~\ref{d:sol} and let $F$ be a finite dimensional
  subspace of $H$ satisfying conditions \eqref{a:finitespan}
  and \eqref{a:Fnondegenerate}. For every $t>0$ denote by $\ff(t)$
  the density of $\pi_F u(t)$ with respect to the Lebesgue measure
  on $F$. Then for every $T>0$ and every $\alpha\in(0,1)$,
  \[
    \sup_{(0,T)}t^{\frac12(d+\alpha)}\|\ff(t)\|_{C^\alpha_b}
      <\infty,
  \]
  where $d=\dim F$.
\end{theorem}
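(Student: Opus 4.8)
The introduction already prescribes the route, so the plan is to work at the level of the Galerkin approximations, prove the estimate there \emph{uniformly in $N$}, and pass to the limit. Fix $T>0$, $\alpha\in(0,1)$, and for $N$ large enough that $F\subset H_N$ let $\ffn(t)$ be the density of $\pi_F u^N(t)$; by \eqref{a:nondegenerate} the process \eqref{e:galerkin} is a hypo-elliptic diffusion, so (Theorem~\ref{t:schwartz}) $\ffn(t)$ is smooth, hence bounded for each fixed $N$ — this a priori, possibly $N$-dependent, boundedness is what will license the bootstrap below. By the computation of Section~\ref{s:formulation}, $\ffn$ solves
\[
  \partial_t \ffn
    = \tfrac12\Tr\bigl(Q_F\,D^2\ffn\bigr)
      + \nabla\cdot\bigl(\nu A y\,\ffn\bigr)
      + \nabla\cdot\bigl(b_{F,N}\,\ffn\bigr),
  \qquad \ffn(0)=\delta_{\pi_F x^N},
\]
with $Q_F\eqdef\pi_F\cov\cov^\star\pi_F$ nonsingular by \eqref{a:Fnondegenerate} and non-local drift $b_{F,N}(t,y)=\E[\pi_F B^N(u^N(t))\mid\pi_F u^N(t)=y]$. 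For the limit: by Definition~\ref{d:sol}, $\pi_F u^{N_k}(t)\to\pi_F u(t)$ in law along a subsequence; a uniform-in-$N$ bound $\|\ffn(t)\|_{C^\alpha_b}\le C\,t^{-\frac12(d+\alpha)}$ on $(0,T)$, together with the moment bound $\int|y|^r\ffn(t,y)\,dy=\E|\pi_F u^N(t)|^r\le C_r$, gives equicontinuity and tightness, whence Arzel\`a--Ascoli produces a locally uniform sub-limit which, by tightness and convergence in law, is the density $\ff(t)$ and inherits the bound; since $T,\alpha$ were arbitrary, this is the theorem.

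Two uniform-in-$N$ inputs drive the estimate. First, the energy balance for \eqref{e:galerkin} and Burkholder's inequality give $\sup_N\E\sup_{[0,T]}\|u^N(t)\|_H^{2p}<\infty$ for every $p\ge1$; since the finitely many modes spanning $F$ are smooth, one has $\|\pi_F B^N(u^N)\|_H\le C_F\|u^N\|_H^2$, so $\sup_N\sup_{[0,T]}\E\|\pi_F B^N(u^N(t))\|_H^p<\infty$ for every $p$. Second — and this is the device that makes the otherwise under-determined equation usable — the non-local term is controlled \emph{by the density itself}: for every Borel $A\subset F$,
\[
  \int_A |b_{F,N}(t,y)|\,\ffn(t,y)\,dy
    = \E\bigl[\|\pi_F B^N(u^N(t))\|_H\,\uno_{\{\pi_F u^N(t)\in A\}}\bigr]
    \le C_m\Bigl(\int_A \ffn(t,y)\,dy\Bigr)^{1-\frac1m},
\]
by H\"older in probability; hence any bound $\|\ffn(t)\|_{L^q}\le M$ upgrades (via $\int_A\ffn(t)\le M|A|^{1-1/q}$ and the arbitrariness of $m$) to $\|b_{F,N}(t)\,\ffn(t)\|_{L^{q'}}\le C_{q,q'}\,M^{\theta}$ for every $q'<q$, with $\theta<1$ arbitrarily close to $1$.

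Boundedness (Proposition~\ref{p:bounded}) then follows by an $L^p$-bootstrap on the mild form. Let $P_\tau$ be the semigroup generated by $g\mapsto\tfrac12\Tr(Q_F D^2 g)+\nabla\cdot(\nu Ay\,g)$; on $[0,T]$ its kernel is Gaussian with nondegenerate, bounded covariance $\Sigma_\tau$, $\|\Sigma_\tau^{-1}\|\le C\tau^{-1}$, so $\|P_\tau g\|_{L^q}\le C\tau^{-\frac d2(\frac1p-\frac1q)}\|g\|_{L^p}$, $\|\nabla P_\tau g\|_{L^q}\le C\tau^{-\frac12-\frac d2(\frac1p-\frac1q)}\|g\|_{L^p}$ and $\|P_\tau g\|_{C^\alpha_b}\le C\tau^{-\frac\alpha2-\frac d{2p}}\|g\|_{L^p}$. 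Writing the equation in mild form restarted at $t/2$,
\[
  \ffn(t)=P_{t/2}\ffn(t/2)+\int_{t/2}^t P_{t-s}\nabla\cdot\bigl(b_{F,N}(s)\ffn(s)\bigr)\,ds,
\]
and using the two inputs above, one proves inductively $\|\ffn(s)\|_{L^{p_k}}\le C_k\,s^{-\frac d2(1-1/p_k)}$ on $(0,T)$ along a chain with $\frac1{p_0}=1$ and $\frac1{p_{k+1}}=\frac1{p_k}-(\frac1d-\varepsilon)$: each step needs only the Duhamel kernel exponent $\frac12+\frac d2(\frac1q-\frac1{p_{k+1}})$ (with $q$ just below $p_k$) to stay below $1$, which is why the increment is taken strictly below the critical value $\frac1d$, and restarting at $t/2$ removes any constraint from the singularity at $s=0$; the term $P_{t/2}\ffn(t/2)$ dominates, which fixes the exponent $\frac d2(1-1/p_k)$. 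After finitely many steps $p_k=\infty$, so $\sup_N\sup_{(0,T)}t^{d/2}\|\ffn(t)\|_{L^\infty}<\infty$ (the a priori finiteness of $\|\ffn(t)\|_{L^\infty}$ for each $N$ being what licenses the induction; Proposition~\ref{p:bbesov} supplies an alternative, slightly better starting point).

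For the H\"older bound, the same restarted formula gives
\[
  \|\ffn(t)\|_{C^\alpha_b}
    \le \|P_{t/2}\ffn(t/2)\|_{C^\alpha_b}
      + \int_{t/2}^t \|\nabla P_{t-s}\bigl(b_{F,N}(s)\ffn(s)\bigr)\|_{C^\alpha_b}\,ds .
\]
The first term is $\le C(t/2)^{-\alpha/2}\|\ffn(t/2)\|_{L^\infty}\le C\,t^{-\frac12(d+\alpha)}$, the claimed rate; the second, via $\|\nabla P_\tau g\|_{C^\alpha_b}\le C\tau^{-\frac12-\frac\alpha2-\frac d{2p}}\|g\|_{L^p}$ with $p$ large (so that $\frac12+\frac\alpha2+\frac d{2p}<1$, using all moments of $\pi_F B^N$), combined with $\|\ffn(s)\|_{L^p}\le C\,s^{-\frac d2(1-1/p)}$ and the upgrade of $b_{F,N}\ffn$, is $O(t^{-\frac12(d+\alpha-1)})$, strictly less singular. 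Hence $\sup_N\sup_{(0,T)}t^{\frac12(d+\alpha)}\|\ffn(t)\|_{C^\alpha_b}<\infty$, and the passage to the limit concludes. The genuine obstacle is not the smoothing machinery but the non-local term, which carries a priori no regularity: the whole argument hinges on the elementary observation above that $b_{F,N}\ffn$ is nonetheless controlled, in $L^p(dy)$, by $\ffn$ itself, which makes the Duhamel iteration self-consistent; the remaining care is the bookkeeping that keeps every Duhamel kernel integrable while the accumulated time-weight lands exactly on $t^{-d/2}$, resp.\ $t^{-\frac12(d+\alpha)}$.
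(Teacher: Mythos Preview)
Your proposal is correct and follows the paper's route: derive the conditioned Fokker--Planck equation \eqref{e:FKF} for $\ffn$, exploit the uniform moment bound $\int_F|b_{F,N}|^p\ffn\,dy\le C_p$ (this is exactly Lemma~\ref{l:Gbound}, and your ``upgrade'' of $b_{F,N}\ffn$ to $L^{q'}$ is the content of \eqref{e:Gboundplus}), bootstrap on the Duhamel formula to uniform $L^\infty$ and then $C^\alpha_b$ bounds, and pass to the limit by Arzel\`a--Ascoli.

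The only differences are in the mechanics of the bootstrap. The paper keeps the linear part $\nu Ax'$ inside the drift $\gfn$ and works with the pure heat semigroup of $\As_F$ rather than your Ornstein--Uhlenbeck semigroup; it splits the Duhamel integral at $t-t\epsilon$ rather than restarting at $t/2$; and for the $L^\infty$ bound (Proposition~\ref{p:bounded}) it bootstraps \emph{downward} in the time-weight exponent, from a possibly large, $N$-dependent $\alpha_0$ furnished by Malliavin calculus (Lemma~\ref{l:ffinite}, Theorem~\ref{t:malliavin}) to $d/2$, closing with a Young-inequality step to extract the uniform constant. Your $L^p$ ladder from $p=1$ is the more direct variant: it is constructive, and --- contrary to your parenthetical --- the a~priori finiteness of $\|\ffn(t)\|_{L^\infty}$ is not actually needed to \emph{seed} the induction (the base case $\|\ffn\|_{L^1}=1$ already launches it); hypoellipticity enters only to justify the mild formulation, and even that could be bypassed by deriving \eqref{e:FKF} weakly from It\^o's formula. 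This is consistent with the paper's own remark that assumption~\eqref{a:nondegenerate} is likely only technical.
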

In particular the density $\ff$ is bounded and H\"older continuous
for every exponent $\alpha<1$. The proof of this theorem will
be given at the end of Section~\ref{s:holder}.
%%
%%
%%
%%%%%%%%%%%%%%%%%%%%%%%%%%%%%%%%%%%%%%%%%%%%%%%%%%%%%%%%%%%%%%%%%%%%%%%%%%%%
\section{Formulation of the conditioned Fokker--Planck equation}%
  \label{s:formulation}

Under our assumptions (see Theorem~\ref{t:schwartz}) we know that, if
$u^N$ is a solution of \eqref{e:galerkin}, then for every $t>0$ the law
of $u^N(t)$ has a smooth density $\fn(t)$ with respect to the Lebesgue
measure in the Schwartz space. Then it is a standard fact that $\fn$
satisfies a Fokker--Planck equation, that in our notations reads
\begin{equation}\label{e:FKN}
  \partial_t\fn
    = \frac12\As_N\fn
      + \Div[N]\bigl((\nu A_Nx+B_N(x))\fn\bigr),
\end{equation}
where
\[ % e:FKlaplace
  \As_N g
    = \Tr(\cov_N\cov_N^\star D^2g),
\]
and $\Div[N]$ is the divergence on $H_N$.

Fix a subspace $F$ of $H_N$ such that conditions \eqref{a:finitespan}
and \eqref{a:Fnondegenerate} hold
and consider the projection $\pi_F u^N$ of $u^N$ on $F$. The
marginal density is given by
\[
  \ffn(t,x')
    = \int_{\Fperp} \fn(t,x'+x'')\,dx'',
\]
where $\Fperp$ is the space orthogonal to F in $H_N$. In the sequel
we will understand $x\in H_N$ as $x=(x',x'')$ with $x'\in F$ and
$x''\in\Fperp$. We wish to derive now an equation satisfied by $\ffn$.
By integrating the equation \eqref{e:FKN} over $\Fperp$, it is
not difficult to see that
\begin{equation}\label{e:FKF}
  \partial_t\ffn
    = \frac12\As_F\ffn
      + \Div[F](\gfn\ffn),
\end{equation}
where $\As_F=\pi_F\As_N\pi_F$ (the resulting operator
is independent of $N$ due to the assumption \eqref{a:diagonal}
that the covariance is diagonal in the Galerkin basis), and
\begin{equation}\label{e:G}
  \gfn(t,x')
    = \nu\pi_FAx'
      + \E[\pi_F B_N(u^N(t))|\pi_Fu^N(t)=x'].
\end{equation}
Indeed, set for brevity $\gn(x)=\nu A_Nx+B_N(x)$, then
\[
  \int_{\Fperp}\Div[N](\gn\fn)
    = \int_{\Fperp}\Div(\pi_F\gn\fn)
      + \int_{\Fperp}\Div[\Fperp](\pi_\Fperp \gn\fn)
\]
where the second integral in the displayed formula above
is zero by integration by parts, and the first integral
can be reinterpreted as
\[
  \begin{aligned}
    \int_{\Fperp}\Div(\pi_F\gn\fn)\,dx''
      &= \int_{\Fperp}\Div\bigl(\pi_F\gn f_{\Fperp|F}(t,x|x')\ffn(t,x')\bigr)\,dx''\\
      &= \Div\bigl(\E[\pi_F\gn(u(t))|\pi_Fu(t)=x']\ffn(t,x')\bigr).
  \end{aligned}
\]
Here $f_{\Fperp|F}(t,x|x')$ is the conditional density
of $u^N(t)$ given $\pi_Fu^N(t)$, and
$\Div$ is the divergence on $F$. An additional simplification,
due to the fact that $A_N$ is also diagonal in the Galerkin
basis, yields \eqref{e:G}. Similar but simpler computations
show that the contribution of $\As_N\fn$, when averaged
over $\Fperp$, is the term $\As_F\ffn$.
\begin{lemma}\label{l:Gbound}
  Let $N$ be large enough (that $F\subset H_N$)
  and $u^N$ be a solution of \eqref{e:galerkin}.
  Then $\gfn(t)\in L^p(F;\ffn\,dx')$ for every $p\geq1$ and $t>0$.
  Moreover, for every $T>0$ there is $\mcdef{cc:Gbound}
  =\mcref{cc:Gbound}(p,\nu,T,F,\|u^N(0)\|_H)>0$ such that
  \begin{equation}\label{e:Gbound}
    \Gc_p(T)^p
      \eqdef\sup_{[0,T]}\int_F|\gfn(t,x')|^p\ffn(t,x')\,dx'
      \leq\mcref{cc:Gbound}.
  \end{equation}
\end{lemma}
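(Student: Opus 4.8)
The plan is to control $\E[\pi_F B_N(u^N(t))\mid \pi_F u^N(t)=x']$ in $L^p(F;\ffn\,dx')$ by reducing, via Jensen's inequality for conditional expectation, to an unconditional moment bound on $B_N(u^N(t))$ in $H$. First I would note that $\gfn = \nu\pi_F A x' + \E[\pi_F B_N(u^N(t))\mid \pi_F u^N(t)=x']$, so it suffices to bound each piece separately. The linear piece $\nu\pi_F A x'$ is easy: since $F$ is finite dimensional, $\|\pi_F A x'\|_H \le \mcref{cc:finite}\|x'\|_H$, and $\int_F \|x'\|_H^p \ffn(t,x')\,dx' = \E[\|\pi_F u^N(t)\|_H^p] \le \E[\|u^N(t)\|_H^p]$, which is finite and uniformly bounded on $[0,T]$ by the standard energy estimates for the Galerkin system \eqref{e:galerkin} (the $L^\infty_\loc([0,\infty),H)$ bound from the martingale solution construction, which at the Galerkin level comes with explicit moment bounds depending only on $\|u^N(0)\|_H$, $\nu$, $T$ and the Hilbert--Schmidt norm of $\cov$).

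For the nonlinear piece, the key step is the conditional Jensen inequality: for the convex function $z\mapsto|z|^p$ on $F$,
\[
  \bigl|\E[\pi_F B_N(u^N(t))\mid \pi_F u^N(t)=x']\bigr|^p
    \le \E\bigl[|\pi_F B_N(u^N(t))|^p \bigm| \pi_F u^N(t)=x'\bigr],
\]
so that, integrating against $\ffn(t,x')\,dx'$ and using the tower property,
\[
  \int_F \bigl|\E[\pi_F B_N(u^N(t))\mid \pi_F u^N(t)=x']\bigr|^p \ffn(t,x')\,dx'
    \le \E\bigl[|\pi_F B_N(u^N(t))|^p\bigr].
\]
Then I would use the finite-dimensional bound $\|\pi_F B_N(x)\|_H = \|\pi_F B(\pi_N x)\|_H \le \mcref{cc:finite}\|\pi_N x\|_H^2 \le \mcref{cc:finite}\|x\|_H^2$ (with the constant $\mcref{cc:finite}$ associated to the subspace $F$, valid since $B_N$ maps into $F$... more precisely applying the estimate to the projection $\pi_F B(\pi_N x, \pi_N x)$ and using that all norms on $F$ are comparable), which reduces everything to $\E[\|u^N(t)\|_H^{2p}]$. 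Again this is finite and uniformly bounded on $[0,T]$ by higher-order energy estimates for \eqref{e:galerkin}: applying It\^o's formula to $\|u^N(t)\|_H^{2p}$, the nonlinear term drops out by antisymmetry of $B$ ($\scalar{u,B(u,u)}=0$), leaving only the dissipative term (with a favorable sign) and the noise terms, which are controlled by the Hilbert--Schmidt norm of $\cov$; a Gronwall argument then gives $\sup_{[0,T]}\E[\|u^N(t)\|_H^{2p}] \le C(p,\nu,T,\|u^N(0)\|_H)$.

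Putting the two pieces together with the triangle inequality in $L^p(F;\ffn\,dx')$ gives \eqref{e:Gbound} with a constant of the stated form, and in particular $\gfn(t)\in L^p(F;\ffn\,dx')$ for every $p\ge 1$ and $t>0$. I do not expect any serious obstacle here; the only mild subtlety is making sure the moment bounds for the Galerkin approximations are genuinely uniform in $N$ (so that in later sections one can pass to the limit) — but this is classical and follows from the It\^o/Gronwall argument above, whose constants do not depend on $N$. A second minor point is that the conditional density $f_{\Fperp|F}$ used to write $\gfn$ as a conditional expectation is well-defined because Theorem~\ref{t:schwartz} guarantees $\fn(t)$ is a genuine (smooth) density, so all the conditioning manipulations are rigorous.
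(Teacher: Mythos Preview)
Your proposal is correct and follows essentially the same route as the paper: reduce the weighted $L^p$ norm of $\gfn$ to an unconditional moment via Jensen's inequality for conditional expectation (the paper phrases this through the conditional density $f_{\Fperp|F}$ and calls it H\"older), then bound $\|\pi_F B_N(u^N)\|_H$ by $\mcref{cc:finite}\|u^N\|_H^2$ and invoke the uniform-in-$N$ polynomial moment estimate \eqref{e:polymoment}. The only cosmetic difference is that the paper does not split $\gfn$ into its linear and nonlinear parts but works directly with $\gn(x)=\nu A_N x + B_N(x)$, arriving at $\E[\|\pi_F\gn(u^N(t))\|_H^p]\le c(\nu^p\E\|u^N\|_H^p+\E\|u^N\|_H^{2p})$ in one stroke.
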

\begin{proof}
  Fix $p\geq1$ and $t>0$. Let $f_{\Fperp|F}(t,x''|x')$ be
  the conditional density of $u^N(t)$ given $\pi_F u^N(t)$,
  then by the H\"older inequality,
  \[
    |\gfn(t,x')|^p
      \leq \int_{\Fperp} |\pi_F \gn(x',x'')|^p f_{\Fperp|F}(t,x''|x')\,dx'',
  \]
  hence 
  \[
    \begin{aligned}
      \int_F |\gfn(t,x')|^p \ffn(t,x')\,dx'
        &\leq \int_{H_N} |\pi_F \gn(x)|^p \fn(t,x)\,dx\\
        &= \E[\|\pi_F \gn(u^N(t))\|_H^p]\\
        &\leq \mconst\bigl(\nu^p\E[\|u^N(t)\|_H^p]
              + \E[\|\pi_FB_N(u^N(t))\|_H^p]\bigr)\\
        &\leq \mconst\bigl(\nu^p\E[\|u^N(t)\|_H^p]
                + \E[\|u^N(t)\|_H^{2p}]\bigr)\\
        &\leq \mcref{cc:Gbound},
    \end{aligned}
  \]
  where we have used the fact that on $F$ all norms are equivalent
  and the (uniform in $N$) estimate \eqref{e:polymoment}.
\end{proof}
\begin{remark}[exponential bound for $\gfn$]
  A slightly (although useless so far) better estimate can be obtained
  using \eqref{e:expomoment}. By the Jensen inequality
  \[
    \e^{\lambda|\gfn(t,x')|}
      \leq\int_{\Fperp}\e^{\lambda|\pi_F \gn(x)}f_{\Fperp|F}(t,x''|x')\,dx'',
  \]
  hence, as in the proof of the lemma above, and by \eqref{e:expomoment},
  \[
    \int_F \e^{\lambda|\gfn(t,x')|}\ffn(t,x')\,dx'
      \leq\E[\e^{\lambda\|\pi_F \gn(u^N(t))\|_H}]
      \leq\E[\e^{\lambda\mconst(1 + \|u^N(t)\|_H^2)}],
  \]
  that is finite for $\lambda$ small enough.
\end{remark}
\begin{remark}
  The equation \eqref{e:FKF} can be recast in a way that shows more
  explicitly how the contribution of modes in $\Fperp$ enter in the 
  evolution of $\ffn$. More precisely, for $x\in H_N$,
  \[
    \pi_F B_N(x)
      = \pi_F B_N(x')
          + \pi_F B_N(x',x'')
          + \pi_F B_N(x'',x')
          + \pi_F B_N(x''),
  \]
  and so
  \[
    \begin{aligned}
      \gfn(t,x')
        &= \nu A_Fx'
           + B_F(x')
           + \E[\pi_F B_N(\pi_Fu^N(t),\pi_{\Fperp}u^N(t))\,|\,\pi_Fu^N(t)=x']\\
        &\quad+ \E[\pi_F B_N(\pi_{\Fperp}u^N(t),\pi_Fu^N(t))\,|\,\pi_Fu^N(t)=x']\\
        &\quad+ \E[\pi_F B_N(\pi_{\Fperp}u^N(t))\,|\,\pi_Fu^N(t)=x'],
    \end{aligned}
  \]
  that is $\ffn$ solves an equation analogous to \eqref{e:FKN} with an additional
  term that takes into account the influence of the evolution of modes from
  $\Fperp$.
\end{remark}
%%
%%%%%%%%%%%%%%%%%%%%%%%%%%%%%%%%%%%%%%%%%%%%%%%%%%%%%%%%%%%%%%%%%%%%%%%%%%%%%%%
\subsection{Integral formulation of \texorpdfstring{\eqref{e:FKF}}{(3.2)}}

Due to our assumptions \eqref{a:finitespan} and \eqref{a:Fnondegenerate}
on $F$, the operator
$\As_F$ is elliptic and with constant coefficients.
Let $\pker$ be its kernel, then the integral formulation
of equation~\eqref{e:FKF} is given by,
\begin{equation}\label{e:intfkf}
  \ffn(t,x')
    = \pker_t(x'-x_0')
      - \int_0^t (\nabla_F\pker_{t-s}\star\bigl(\gfn(s,\cdot)\ffn(s,\cdot)\bigr)(x')\,ds,
\end{equation}
where $x_0'=\pi_f u^N(0)$.
\begin{proposition}\label{p:preg}
  Under assumptions \eqref{a:finitespan} and \eqref{a:Fnondegenerate}
  on $F$, for every $p$ with $1\leq p\leq\infty$ and every $n\geq1$
  there is $\mcdef{cc:p}>0$ such that for every $h\in F$
  with $|h|\leq1$, and $t>0$,
  \begin{alignat}{2}
    \|\pker_t\|_{L^p}
      &\leq \mcref{cc:p}t^{-\frac{d}{2q}}
        \qquad&
    \|\nabla\pker_t\|_{L^p}
      &\leq \mcref{cc:p}t^{-(\frac{d}{2q} + \frac12)},\\
    \|\Delta_h^n\pker_t\|_{L^p}
      &\leq \mcref{cc:p}t^{-\frac{d}{2q}}\Bigl(1\wedge\frac{|h|}{\sqrt{t}}\Bigr)^n,
        \qquad&
    \|\Delta_h^n\nabla\pker_t\|_{L^p}
      &\leq \mcref{cc:p}t^{-(\frac{d}{2q} + \frac12)}\Bigl(1\wedge\frac{|h|}{\sqrt{t}}\Bigr)^n,
  \end{alignat}
  where $q$ is the H\"older conjugate exponent of $p$ and $d=\dim F$.
\end{proposition}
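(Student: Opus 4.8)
The plan is to identify $\pker_t$ explicitly as a Gaussian kernel and then reduce every bound to the self\=/similar scaling of that kernel, together with two elementary facts: all $L^p$ norms of the derivatives of a fixed non\=/degenerate Gaussian are finite, and finite differences of order $n$ are controlled both by the function itself and by its $n$\=/th derivative.

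First I would write $\As_F$ in coordinates. Identifying $F$ with $\R^d$, assumption \eqref{a:Fnondegenerate} says that the symmetric matrix $Q\eqdef\pi_F\cov\cov^\star\pi_F|_F$ is positive definite, so $\tfrac12\As_F=\tfrac12\sum_{i,j}Q_{ij}\partial_i\partial_j$ is a constant\=/coefficient, strictly elliptic operator, and its heat kernel is the Gaussian density
\[
  \pker_t(y)
    = \frac{1}{(2\pi t)^{d/2}(\det Q)^{1/2}}
      \exp\Bigl(-\tfrac{1}{2t}\scalar{Q^{-1}y,y}\Bigr),
    \qquad y\in F,
\]
that is, the law of a centered $\R^d$\=/valued Gaussian with covariance $tQ$. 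It has the scaling structure $\pker_t(y)=t^{-d/2}g(y/\sqrt t)$ with $g\eqdef\pker_1$, hence $D^k\pker_t(y)=t^{-(d+k)/2}(D^kg)(y/\sqrt t)$ for every $k\geq0$.

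Next I would read the $L^p$ estimates off this scaling. A change of variables gives $\|D^k\pker_t\|_{L^p}=t^{-(\frac{d}{2q}+\frac k2)}\|D^kg\|_{L^p}$, where $q$ is the conjugate exponent of $p$ (with $q=1$ when $p=\infty$) and $\|D^kg\|_{L^p}<\infty$ because $g$ and all its derivatives are Schwartz functions. Taking $k=0$ and $k=1$ yields the first two displayed inequalities. For the finite\=/difference bounds I would combine the two elementary estimates $\|\Delta_h^n\varphi\|_{L^p}\leq 2^n\|\varphi\|_{L^p}$ and $\|\Delta_h^n\varphi\|_{L^p}\leq|h|^n\|D^n\varphi\|_{L^p}$ (the latter from writing $\Delta_h^1\varphi(x)=\int_0^1\nabla\varphi(x+sh)\cdot h\,ds$ and iterating). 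Applying these with $\varphi=\pker_t$ and with $\varphi=\nabla\pker_t$, and inserting the scaling identities above, produces
\[
  \|\Delta_h^n\pker_t\|_{L^p}\leq C\,t^{-\frac{d}{2q}}\bigl(1\wedge\tfrac{|h|}{\sqrt t}\bigr)^n,
  \qquad
  \|\Delta_h^n\nabla\pker_t\|_{L^p}\leq C\,t^{-(\frac{d}{2q}+\frac12)}\bigl(1\wedge\tfrac{|h|}{\sqrt t}\bigr)^n,
\]
which are the remaining two assertions; the constant depends only on $d$, $p$, $n$ and $Q$ (through $\|D^kg\|_{L^p}$, $k\leq n+1$), and in particular is independent of $N$, since $Q$ does not depend on $N$ by \eqref{a:diagonal}.

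There is no serious obstacle here: the only points requiring a little care are the bookkeeping of the homogeneity exponents (checking that $-\tfrac d2+\tfrac{d}{2p}=-\tfrac{d}{2q}$ and that each spatial derivative costs a further factor $t^{-1/2}$) and the verification that the relevant norms of the fixed profile $g$ are finite, which is immediate. The reason the statement is worth isolating is that these precise time weights — and the gain $(1\wedge|h|/\sqrt t)^n$ coming from finite differences — are exactly what is needed to run the iterative estimates on the integral equation \eqref{e:intfkf} in the sections that follow.
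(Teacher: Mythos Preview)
Your proposal is correct and follows exactly the approach the paper sketches: identify $\pker_t$ as a non\-/degenerate Gaussian, use the parabolic self\-/similarity $\pker_t(y)=t^{-d/2}\pker_1(y/\sqrt t)$, and read off all the bounds from the finiteness of the $L^p$ norms of the fixed profile together with the trivial and mean\-/value estimates on $\Delta_h^n$. The paper merely states these computations are elementary and omits them; you have spelled them out accurately.
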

\begin{proof}
  The operator $\As_F$ is a second order elliptic constant
  coefficients operator, so its kernel $\pker_t$ is, up to an
  invertible linear
  change of variables, the standard heat kernel. In particular, 
  by parabolic scaling, $\pker_t(x) = t^{-d/2}\pker_1(x/\sqrt{t})$.
  With these observation at hand the proof of the proposition
  follows from similar computations for the heat kernel. The
  computations are elementary and thus omitted.
\end{proof}
%%
%%
%%
%%%%%%%%%%%%%%%%%%%%%%%%%%%%%%%%%%%%%%%%%%%%%%%%%%%%%%%%%%%%%%%%%%%%%%%%%%%%
\section{H\"older regularity}\label{s:holder}

In this section we prove our main Theorem~\ref{t:main}.
Prior to this, we give a proof of the result of \cite{DebRom2014}
using the Fokker--Planck formulation (see Proposition~\ref{p:bbesov}).
Notice that, consistently with \cite{DebRom2014}, we do not need
assumption \eqref{a:nondegenerate} to do so. Then, as an intermediate step,
we prove boundedness of the densities. This is, not surprisingly in fact,
the crucial step and with boundedness at hand the H\"older regularity
follows easily. In order to obtain $L^\infty$ bounds we need to know
though that densities are smooth (albeit with bounds not necessarily
uniform in $N$). We will take care of this in the appendix.

Notice finally that, unlike in \cite{DebRom2014}, we have not been
able to derive better bounds for stationary solutions. This may have
a twofold reason. On the one hand the higher summability needed to
derive better bounds for stationary solutions are not as good as those
of Lemma~\ref{l:Gbound}. On the other hand the method of \cite{DebRom2014}
exploits non--trivially correlations in time that are harder to use in
this framework.
%%
%%%%%%%%%%%%%%%%%%%%%%%%%%%%%%%%%%%%%
\subsection{Basic Besov regularity}

In this section we wish to give a proof of the Besov
regularity of the density $\ff$ alternative to \cite{DebRom2014}, using
the formulation with the conditioned Fokker--Planck \eqref{e:intfkf}.
This is to emphasize that in \cite{DebRom2014} Lemma~\ref{l:Gbound}
is (implicitly) used only with $p=1$ and thus that  there is space for
improvement. This will be then the subject of the rest of the section. 
\begin{proposition}\label{p:bbesov}
  Let $F$ be a subspace of $H$ satisfying \eqref{a:finitespan}
  and \eqref{a:Fnondegenerate}. Then
  there is $\mcdef{cc:bbesov}>0$ such that for every $N\geq1$,
  $x_0\in H_N$, and $t>0$
  \[
    \|\ffn(t)\|_{B^1_{1,\infty}}
      \leq\frac{\mcref{cc:bbesov}}{\sqrt{1\wedge t}}
        \bigl(1 + \Gc_1(t)\bigr),
  \]
  where $\ffn$ is the density of the solution of \eqref{e:galerkin}
  with initial condition $x_0$.
\end{proposition}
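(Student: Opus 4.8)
The plan is to work from the integral formulation \eqref{e:intfkf} and estimate each of the two terms on the right-hand side in $B^1_{1,\infty}(F)$, using the kernel estimates of Proposition~\ref{p:preg} with $p=1$ (so $q=\infty$ and the factors $t^{-d/2q}$ disappear). Recall that the $B^1_{1,\infty}$ seminorm is controlled by $\sup_{|h|\le1}|h|^{-1}\|\Delta_h^2 g\|_{L^1}$, and the full norm adds $\|g\|_{L^1}$; since $\ffn(t)$ is a probability density, $\|\ffn(t)\|_{L^1}=1$, so only the seminorm needs real work. First I would handle the free term $\pker_t(\cdot-x_0')$: by Proposition~\ref{p:preg} with $n=2$, $\|\Delta_h^2\pker_t\|_{L^1}\le \mcref{cc:p}\,(1\wedge |h|/\sqrt t)^2 \le \mcref{cc:p}\,(1\wedge t)^{-1/2}|h|$, which gives the desired bound $\lesssim (1\wedge t)^{-1/2}$ with no dependence on $N$ or $x_0$.

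Next I would treat the nonlocal convolution term $\int_0^t \nabla_F\pker_{t-s}\star(\gfn(s,\cdot)\ffn(s,\cdot))\,ds$. Applying $\Delta_h^2$ (which commutes with convolution) and putting it on the kernel factor, Young's inequality gives
\[
  \|\Delta_h^2(\nabla\pker_{t-s}\star(\gfn\ffn))\|_{L^1}
    \le \|\Delta_h^2\nabla\pker_{t-s}\|_{L^1}\,\|\gfn(s)\ffn(s)\|_{L^1}.
\]
By Lemma~\ref{l:Gbound} with $p=1$, $\|\gfn(s)\ffn(s)\|_{L^1}=\int_F|\gfn(s,x')|\ffn(s,x')\,dx'\le\Gc_1(s)\le\Gc_1(t)$ (the latter since $\Gc_1$ is a sup over a growing interval, assuming monotonicity in $T$, or simply bounding by $\Gc_1(t)$ as in the statement). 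By Proposition~\ref{p:preg} with $p=1$, $n=2$, $\|\Delta_h^2\nabla\pker_{t-s}\|_{L^1}\le \mcref{cc:p}(t-s)^{-1/2}(1\wedge |h|/\sqrt{t-s})^2$. Dividing by $|h|$ and taking $\sup_{|h|\le1}$: since $(1\wedge|h|/\sqrt\tau)^2/|h| \le |h|/\tau$ when $|h|\le\sqrt\tau$ and $\le 1/(|h|)\cdot\uno_{|h|>\sqrt\tau}\le \tau^{-1/2}$ otherwise — in all cases it is $\le \tau^{-1/2}\cdot(|h|^{1/2}/\tau^{1/4})\wedge\dots$; more cleanly one just uses $(1\wedge|h|/\sqrt\tau)^2\le |h|/\sqrt\tau$ for $|h|\le1$, giving $\sup_{|h|\le1}|h|^{-1}\|\Delta_h^2\nabla\pker_{t-s}\|_{L^1}\le \mcref{cc:p}(t-s)^{-1}$. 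That exponent $-1$ is not integrable near $s=t$, so one instead splits: for $|h|\ge\sqrt{t-s}$ use $\|\Delta_h^2\nabla\pker_{t-s}\|_{L^1}\le\mcref{cc:p}(t-s)^{-1/2}$ and bound $|h|^{-1}\le(t-s)^{-1/2}$ fails too — so the right move is to keep $(1\wedge|h|/\sqrt{t-s})$ to the power $1$ only, i.e.\ bound $(1\wedge|h|/\sqrt{t-s})^2\le (1\wedge|h|/\sqrt{t-s})\le |h|^{?}$; the clean choice is $(1\wedge|h|/\sqrt{t-s})^2 \le |h|\,(t-s)^{-1/2}\wedge 1$, and then $|h|^{-1}$ times this is $\le (t-s)^{-1/2}$, which \emph{is} integrable over $s\in(0,t)$ with integral $\lesssim t^{1/2}\le(1\wedge t)^{1/2}\cdot(\text{const})$ for $t\le1$, and for $t\ge1$ one splits the time integral at $t-1$. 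Carrying this out yields $[\ffn(t)]_{B^1_{1,\infty}}\lesssim (1\wedge t)^{-1/2}\Gc_1(t)$.

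Combining the two contributions gives $\|\ffn(t)\|_{B^1_{1,\infty}}\le 1 + \mconst(1\wedge t)^{-1/2}(1+\Gc_1(t))$, which after adjusting the constant is exactly the claimed bound $\mcref{cc:bbesov}(1\wedge t)^{-1/2}(1+\Gc_1(t))$; crucially every constant comes from Proposition~\ref{p:preg} and is independent of $N$ and $x_0$. I expect the main obstacle to be purely bookkeeping: getting the time-integral of the kernel seminorm to converge while producing the sharp power $(1\wedge t)^{-1/2}$ rather than a worse singularity. The delicate point is choosing how many powers of the regularizing factor $(1\wedge|h|/\sqrt{t-s})$ to spend on the H\"older increment versus on time integrability — spend too many and the $|h|$-supremum blows up, spend too few and the $ds$-integral diverges at $s=t$; the balance that works is using the factor to exactly the first power after the $|h|^{-1}$ is accounted for. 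Everything else (commuting $\Delta_h^n$ with convolution, Young's inequality, $\|\ffn\|_{L^1}=1$, invoking Lemma~\ref{l:Gbound} only with $p=1$) is routine.
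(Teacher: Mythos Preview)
Your overall strategy is exactly the paper's: use the integral formulation \eqref{e:intfkf}, apply $\Delta_h^2$, Young's inequality, Lemma~\ref{l:Gbound} with $p=1$, and the kernel bounds of Proposition~\ref{p:preg}. The free term and the setup for the convolution term are correct. The gap is in the time integral for the convolution term, and it is an error of \emph{order of operations}.

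You try to bound $\sup_{|h|\le1}|h|^{-1}\|\Delta_h^2\nabla\pker_{t-s}\|_{L^1}$ first and then integrate in $s$. But this supremum is essentially $[\nabla\pker_{t-s}]_{B^1_{1,\infty}}$, which is genuinely of order $(t-s)^{-1}$, so the $s$--integral must diverge --- as you yourself observed on your first attempt. Your ``clean choice'' at the end does not escape this: you correctly bound $|h|^{-1}\bigl(1\wedge|h|/\sqrt{t-s}\bigr)^2\le(t-s)^{-1/2}$, but you have silently dropped the additional factor $(t-s)^{-1/2}$ coming from the gradient of the kernel; with it the integrand is still $(t-s)^{-1}$ and nothing has been gained.

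The fix (and what the paper does) is to keep $h$ fixed, integrate in $s$ \emph{first}, and only then divide by $|h|$ and take the supremum. Keeping the full second power of the regularizing factor and splitting at $t-s=|h|^2$,
\[
  \int_0^t \frac{1}{\sqrt{t-s}}\Bigl(1\wedge\frac{|h|}{\sqrt{t-s}}\Bigr)^2\,ds
    = \int_0^{|h|^2\wedge t}\frac{d\tau}{\sqrt\tau}
      + |h|^2\int_{|h|^2\wedge t}^t\frac{d\tau}{\tau^{3/2}}
    \le 2|h| + 2|h|
    = 4|h|,
\]
uniformly in $t$. Hence the convolution term contributes at most $4\mcref{cc:p}\,\Gc_1(t)\,|h|$ to $\|\Delta_h^2\ffn(t)\|_{L^1}$, and together with your (correct) bound on the free term one obtains $[\ffn(t)]_{B^1_{1,\infty}}\le 4\mcref{cc:p}(1\wedge t)^{-1/2}(1+\Gc_1(t))$. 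In short, the ``balance'' you were searching for is not about spending only one power of $\bigl(1\wedge|h|/\sqrt{t-s}\bigr)$; it is about integrating before taking the supremum in $h$.
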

\begin{proof}
  Clearly
  $\|\ffn\|_{L^1}=1$, so we need to estimate only the Besov
  semi--norm $[\ffn]_{B^1_{1,\infty}}$. Let $h\in F$, with
  $|h|_F\leq 1$, then by Proposition~\ref{p:preg}
  \[
    \|\Delta_h^2\pker_t\|_{L^1}
      \leq\mcref{cc:p}\Bigl(1\wedge\frac{|h|}{\sqrt{t}}\Bigr)
      \leq\frac{\mcref{cc:p}}{\sqrt{1\wedge t}}|h|,
        \qquad
    \|\Delta_h^2\nabla\pker_t\|_{L^1}
      \leq\frac{\mcref{cc:p}}{\sqrt{t}}
        \Bigl(1\wedge\frac{|h|}{\sqrt{t}}\Bigr)^2.
  \]
  We have by \eqref{e:intfkf} that
  \[
    \Delta_h^2\ffn(t)
      = \Delta_h^2\pker_t
        - \int_0^t (\Delta_h^2\nabla\pker_{t-s})\star(\gfn(s)\ffn(s))\,ds,
  \]
  and, by the H\"older inequality and Lemma~\ref{l:Gbound},
  for every $s\leq t$,
  \[
    \begin{aligned}
      \|(\Delta_h^2\nabla\pker_{t-s})\star(\gfn(s)\ffn(s))\|_{L^1}
        &\leq \|\Delta_h^2\nabla\pker_{t-s}\|_{L^1}\|\gfn(s)\ffn(s)\|_{L^1}\\
        &\leq \frac{\mcref{cc:p}}{\sqrt{t-s}}
           \Bigl(1\wedge\frac{|h|}{\sqrt{t-s}}\Bigr)^2
           \Gc_1(t),
    \end{aligned}
  \]
  where $\Gc_1$ is defined in \eqref{e:Gbound}. Therefore,
  \[
    \begin{aligned}
      \|\Delta_h^2\ffn(t)\|_{L^1}
        &\leq \frac{\mcref{cc:p}}{\sqrt{1\wedge t}}|h|
           + \Gc_1(t)\int_0^t \frac{\mcref{cc:p}}{\sqrt{t-s}}
             \Bigl(1\wedge\frac{|h|}{\sqrt{t-s}}\Bigr)^2\,ds\\
        &\leq\frac{4\mcref{cc:p}}{\sqrt{1\wedge t}}(1 + \Gc_1(t))|h|,
    \end{aligned}
  \]
  and, by definition, $[\ffn]_{B^1_{1,\infty}}\leq 4\mcref{cc:p}
  (1\wedge t)^{-1/2}(1 + \Gc_1(t))$.
\end{proof}
In the above proof we have only used that $\gfn\in L^1(F)$.
As long as we only know this, the previous result is essentially optimal.
Indeed, the term $\Div(\gfn\ffn)$ is, roughly, in $W^{-1,1}$ hence,
by convolution with the heat kernel, we can expect that $\ffn$
is at most in $W^{1,1}$ (that is very close to what we
have proved).

Since by Lemma~\ref{l:Gbound} the quantity $\Gc_1$ is uniformly
bounded in $N$, in the limit $N\to\infty$ we can derive a result
for solutions of the infinite dimensional problem,
as in~\cite{DebRom2014}.
\begin{corollary}\label{c:bbesov}
  Given a weak martingale solution $u$ of the Navier--Stokes equations
  as in Definition~\ref{d:sol} and a finite dimensional sub--space $F$ of $H$
  such that conditions \eqref{a:finitespan} and \eqref{a:Fnondegenerate}
  hold, for every $t>0$ the random variable $\pi_F u(t)$ admits a density
  in $B^1_{1,\infty}(F)$ with respect to the Lebesgue measure on $F$.
\end{corollary}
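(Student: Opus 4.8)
The plan is to pass to the limit $N\to\infty$ in the uniform Besov bound provided by Proposition~\ref{p:bbesov}. Fix $t>0$. By Definition~\ref{d:sol} there is a sequence $N_k\uparrow\infty$ and initial data $x_k\in H_{N_k}$ with $x_k\to u(0)$ such that the Galerkin solutions $u^{N_k}$ converge in distribution to $u$; in particular $\pi_F u^{N_k}(t)\to\pi_F u(t)$ in distribution on the finite dimensional space $F$. Each $\pi_F u^{N_k}(t)$ has a density $\ffn[{N_k}](t)$, and by Proposition~\ref{p:bbesov} together with the uniform-in-$N$ bound on $\Gc_1(t)$ from Lemma~\ref{l:Gbound} (note $\|x_k\|_H$ is bounded since $x_k\to u(0)$), we have a bound
\[
  \|\ffn[{N_k}](t)\|_{B^1_{1,\infty}(F)}\leq C(t)
\]
uniform in $k$, where $C(t)$ depends only on $t$, $F$, $\nu$ and $\sup_k\|x_k\|_H$.

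The next step is a compactness/lower-semicontinuity argument. The laws $\mu_k$ of $\pi_F u^{N_k}(t)$ converge weakly to the law $\mu$ of $\pi_F u(t)$; the densities $\ffn[{N_k}](t)$ are bounded in $L^1(F)$ and, being bounded in $B^1_{1,\infty}(F)$, are also bounded in $B^1_{1,\infty}$, which embeds compactly into $L^1_{\loc}(F)$ (it embeds into, e.g., $W^{s,1}$ for $s<1$, hence locally compactly into $L^1$). Therefore, up to a further subsequence, $\ffn[{N_k}](t)\to g$ strongly in $L^1_{\loc}(F)$ and weakly in the sense of measures, for some $g\in L^1(F)$ with $g\geq0$. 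Passing to the limit in $\int\varphi\,\ffn[{N_k}](t)\,dx = \E[\varphi(\pi_F u^{N_k}(t))]$ for $\varphi\in C_b(F)$ identifies $g$ as the density of $\mu=\mathrm{Law}(\pi_F u(t))$ with respect to Lebesgue measure on $F$; in particular $\pi_F u(t)$ is absolutely continuous. Finally, the Besov semi-norm $[\,\cdot\,]_{B^1_{1,\infty}(F)}$ is lower semicontinuous under $L^1_\loc$ convergence, since for each fixed $h$ the functional $f\mapsto\|\Delta_h^2 f\|_{L^1}/|h|$ is lower semicontinuous under $L^1_\loc$ (indeed $L^1$-weak) convergence and the semi-norm is a supremum over such functionals; hence $\|g\|_{B^1_{1,\infty}(F)}\leq\liminf_k\|\ffn[{N_k}](t)\|_{B^1_{1,\infty}(F)}\leq C(t)<\infty$, so $g\in B^1_{1,\infty}(F)$.

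I expect the main (and only) real obstacle to be the last point: correctly setting up the lower semicontinuity of the Besov norm and the identification of the weak limit with the true density, i.e. ruling out any loss of mass (tightness is automatic here since each $\mu_k$ is a probability measure and they converge weakly, so no mass escapes to infinity, giving $\|g\|_{L^1}=1$). Everything else is a routine extraction of subsequences and passage to the limit in the uniform bound. I would organise the write-up as: (i) recall the convergence from Definition~\ref{d:sol}; (ii) invoke Proposition~\ref{p:bbesov} and Lemma~\ref{l:Gbound} for the uniform bound; (iii) extract a locally-$L^1$-convergent subsequence via the compact embedding of $B^1_{1,\infty}$; (iv) identify the limit as the density of $\pi_F u(t)$ using weak convergence; (v) conclude membership in $B^1_{1,\infty}(F)$ by lower semicontinuity of the semi-norm.
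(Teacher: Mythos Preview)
Your argument is correct and follows essentially the same strategy as the paper: derive a uniform Besov bound from Proposition~\ref{p:bbesov} and Lemma~\ref{l:Gbound}, extract a subsequence by compactness, identify the limit as the density of $\pi_F u(t)$, and conclude by lower semicontinuity of the Besov semi--norm. The only minor difference is the compactness mechanism---the paper uses the embedding $B^1_{1,\infty}\hookrightarrow L^p$ for some $p>1$ to get uniform integrability and hence weak $L^1$ compactness (Dunford--Pettis), while you use the compact embedding into $L^1_{\loc}$ together with tightness; both routes are standard and lead to the same conclusion.
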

\begin{proof}
  Let $u$ be a solution of \eqref{e:nseabs} according to
  Definition~\ref{d:sol}. Then $u$ is a limit point,
  in distribution, of the sequence $(u^N)_{N\geq1}$
  of solutions of \eqref{e:galerkin}. By
  Proposition~\ref{p:bbesov} and the embedding of
  $B^1_{1,\infty}$ in $L^p$ (for a $p>1$ depending on
  the dimension of $F$), the densities $\ffn(t)$
  of $\pi_Fu^N(t)$ are uniformly integrable.
  We can thus find sub--sequences, that we will
  keep denoting by $(u^N)_{N\geq1}$ and $(\ffn)_{N\geq1}$,
  such that $u^N$ converges in distribution to $u$ and
  $\ffn(t)$ converges weakly in $L^1$ to the density $\ff(t)$
  of $\pi_Fu(t)$. The $B^1_{1,\infty}$ bound on $\ff$
  follows now easily because of the weak convergence.
\end{proof}
%%
%%%%%%%%%%%%%%%%%%%%%%%%%%%%%%%%%%%%%%%%%
\subsection{Boundedness of the densities}

As an intermediate step in the proof of our main result
we prove that under our assumptions the densities are
bounded. This is the crucial step and having boundedness
at hand, H\"older regularity then is not difficult
to prove.

Given a finite dimensional sub--space $F$ of $H$ satisfying
conditions \eqref{a:finitespan} and \eqref{a:Fnondegenerate},
an integer $N\geq1$ large enough that $F\subset H_N$, 
and a solution $u^N$ of \eqref{e:galerkin}, define for
every $T>0$ and $\alpha>0$,
\begin{equation}
  \Fc_{F,N}^\alpha(T)
    \eqdef\sup_{t\in[0,T]} t^\alpha\|\ffn(t)\|_\infty.
\end{equation}
\begin{lemma}\label{l:ffinite}
  Under assumptions \eqref{a:diagonal} and \eqref{a:nondegenerate},
  if $N\geq1$ is large enough (that $F\subset H_N$),
  there is $\alpha_0\geq\frac{d}2$ such that
  if $u^N$ is a solution of \eqref{e:galerkin} and
  $\ffn(\cdot)$ is the density of $\pi_Fu^N(\cdot)$,
  then $\Fc_{F,N}^{\alpha_0}(T)<\infty$ for every $T>0$.
\end{lemma}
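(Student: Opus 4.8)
The plan is to read this lemma as the ``marginal'' reformulation of the hypoelliptic smoothing estimate for the Galerkin diffusion \eqref{e:galerkin}, the quantitative version of which is exactly what is established in the appendix. Concretely, under \eqref{a:diagonal}, \eqref{a:nondegenerate} and for $N$ large enough that $F\subset H_N$, the generator of \eqref{e:galerkin} satisfies H\"ormander's bracket condition on $H_N$ (this is \cite{Rom2004}), so Theorem~\ref{t:schwartz} gives, for every $t>0$, a density $\fn(t,\cdot)$ of the law of $u^N(t)$ lying in the Schwartz space of $H_N$. The input one really needs, and which the appendix supplies through Malliavin calculus (Theorem~\ref{t:malliavin} on the non-degeneracy of the Malliavin matrix of $u^N(t)$ together with the moment bounds \eqref{e:polymoment}--\eqref{e:expomoment}), is the \emph{quantitative} statement: for every $T>0$, every multi-index $\beta$ and every integer $M\ge1$ there are $\gamma=\gamma(\beta,M,N)>0$ and $C_N>0$ with
\[
  \sup_{x\in H_N}(1+|x|^2)^{M}\,|\partial^{\beta}\fn(t,x)|\ \le\ C_N\,t^{-\gamma},\qquad t\in(0,T].
\]
For $t$ bounded away from $0$ this is immediate from the Schwartz regularity, so the substance — and the main obstacle — is the blow-up rate as $t\downarrow0$, i.e.\ controlling the Malliavin--Sobolev norms of $u^N(t)$ and the negative moments of the determinant of its Malliavin matrix by negative powers of $t$; this is the standard (but somewhat lengthy) computation relegated to the appendix, and is where assumption \eqref{a:nondegenerate} and \cite{Rom2004} are used.

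Granting this, I pass to the marginal: for $x'\in F$, using $\ffn(t,x')=\int_{\Fperp}\fn(t,x',x'')\,dx''$, the displayed bound with $\beta=0$, and an integer $M$ with $2M>\dim\Fperp=\dim H_N-d$,
\[
  0\ \le\ \ffn(t,x')\ \le\ C_N\,t^{-\gamma}\int_{\Fperp}(1+|x''|^2)^{-M}\,dx''\ =\ C_N'\,t^{-\gamma},\qquad t\in(0,T],
\]
uniformly in $x'$, whence $\|\ffn(t)\|_\infty\le C_N'\,t^{-\gamma}$ on $(0,T]$.

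Finally I set $\alpha_0\eqdef\max(\gamma,\tfrac d2)$. Since $0<t\le T$ and $\alpha_0\ge\gamma$, we get $t^{\alpha_0}\|\ffn(t)\|_\infty\le C_N'\,t^{\alpha_0-\gamma}\le C_N'\max(1,T)^{\alpha_0}<\infty$, that is $\Fc_{F,N}^{\alpha_0}(T)<\infty$ with $\alpha_0\ge d/2$ as required (enlarging $\gamma$ so as to meet $\alpha_0\ge d/2$ costs nothing precisely because $(0,T]$ is bounded). All constants here are allowed to depend on $N$; the uniform-in-$N$ control is what will later be extracted, by a different argument, in Proposition~\ref{p:bounded}.
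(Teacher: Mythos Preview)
Your proof is correct and follows essentially the same approach as the paper: invoke the quantitative Malliavin bound of Theorem~\ref{t:malliavin} to get $(1\wedge t)^{\gamma}(1+|x|^k)\fn(t,x)$ bounded, integrate over $\Fperp$ using a weight that is integrable there, and then enlarge the exponent to at least $d/2$ (which is harmless on $(0,T]$). Your write-up is in fact slightly more careful than the paper's in making the integrability condition on the weight explicit and in spelling out the step $\alpha_0=\max(\gamma,d/2)$.
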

\begin{proof}
  Fix $k>N$. By Theorem~\ref{t:malliavin} we know that there is
  $\alpha_k>0$ such that
  $M\eqdef\sup_{t,x}(1\wedge t)^{\alpha_k}(1+|x|^k)\fn(x)<\infty$.
  Hence
  \[
    \begin{multlined}[.9\linewidth]
      (1\wedge t)^{\alpha_k}\ffn(t,x')
        = (1\wedge t)^{\alpha_k}\int_\Fperp\fn(t,x)\,dx''\leq\\
        \leq\int_\Fperp\frac1{1+|x''|^k}(1\wedge t)^{\alpha_k}
          (1+|x|^k)\fn(x)\,dx''
        \leq\mconst M.
    \end{multlined}
  \]
  Therefore $\Fc_{F,N}^{\alpha_k}<\infty$.
\end{proof}
The next result provides a uniform (in $N$) estimate
of $\Fc_{F,N}^{d/2}(T)$.
\begin{proposition}\label{p:bounded}
  Under assumptions \eqref{a:diagonal} and \eqref{a:nondegenerate},
  given $T>0$ and $M>0$, if $F$ is a subspace of $H$ satisfying
  \eqref{a:finitespan} and \eqref{a:Fnondegenerate}, there is
  $\mcdef{cc:bounded}=\mcref{cc:bounded}(\nu,T,F,M)>0$
  such that if $N$ is large enough (that $F\subset H_N$), and
  if $u^N$ is a solution of \eqref{e:galerkin} with $\|u^N(0)\|_H\leq M$,
  then
  \[
    \Fc_{F,N}^{d/2}(T)
      \leq \mcref{cc:bounded}.
  \]
\end{proposition}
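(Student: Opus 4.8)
The plan is to run a bootstrap (iteration) argument on the integral equation \eqref{e:intfkf}, using the kernel estimates of Proposition~\ref{p:preg} together with the $L^p$ bounds on $\gfn$ from Lemma~\ref{l:Gbound}. First I would write, from \eqref{e:intfkf},
\[
  \|\ffn(t)\|_\infty
    \leq \|\pker_t\|_\infty
      + \int_0^t \|\nabla_F\pker_{t-s}\star(\gfn(s)\ffn(s))\|_\infty\,ds,
\]
and estimate the convolution by Hölder: fixing exponents $p,q$ with $\frac1p+\frac1q=1$, we have $\|\nabla_F\pker_{t-s}\star(\gfn(s)\ffn(s))\|_\infty \leq \|\nabla\pker_{t-s}\|_{L^q}\,\|\gfn(s)\ffn(s)\|_{L^p}$. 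By Proposition~\ref{p:preg}, $\|\nabla\pker_{t-s}\|_{L^q}\leq\mcref{cc:p}(t-s)^{-(d/(2p)+1/2)}$ (note the conjugate of $q$ is $p$). For the factor $\|\gfn(s)\ffn(s)\|_{L^p}$, I would interpolate: write $|\gfn\ffn|^p = |\gfn|^p \ffn^{p-1}\cdot\ffn$ and bound $\ffn^{p-1}\leq\|\ffn(s)\|_\infty^{p-1}$, so that
\[
  \|\gfn(s)\ffn(s)\|_{L^p}^p
    \leq \|\ffn(s)\|_\infty^{p-1}\int_F|\gfn(s)|^p\ffn(s)\,dx'
    \leq \|\ffn(s)\|_\infty^{p-1}\,\Gc_p(s)^p,
\]
hence $\|\gfn(s)\ffn(s)\|_{L^p}\leq\|\ffn(s)\|_\infty^{1-1/p}\Gc_p(s)$, with $\Gc_p(T)$ finite and uniform in $N$ by Lemma~\ref{l:Gbound}.

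Combining, and writing $\Phi(t)=t^{d/2}\|\ffn(t)\|_\infty$ (which is \emph{a priori} finite on $[0,T]$ by Lemma~\ref{l:ffinite}, though with a possibly $N$-dependent, and possibly worse, exponent $\alpha_0\geq d/2$ — so one should first argue $\Phi$ is finite, e.g. by noting $\|\ffn(t)\|_\infty\leq t^{-\alpha_0}\Fc^{\alpha_0}_{F,N}$ and running the iteration below on a bounded interval to improve $\alpha_0$ down to $d/2$), the integral term becomes
\[
  \mcref{cc:p}\,\Gc_p(T)\int_0^t (t-s)^{-(d/(2p)+1/2)}\,s^{-(d/2)(1-1/p)}\,\Phi(s)^{1-1/p}\,ds.
\]
Choosing $p$ \emph{close to $1$} (say $p=1+\varepsilon$ with $\varepsilon$ small, depending on $d$) makes the exponent $d/(2p)+1/2$ strictly less than $1$, so the kernel singularity at $s=t$ is integrable, and the exponent $(d/2)(1-1/p)$ of $s$ is strictly less than $1$ as well; moreover $\Phi(s)^{1-1/p}$ carries only a small power of $\sup_{[0,t]}\Phi$. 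Multiplying through by $t^{d/2}$ and using the Beta-integral identity $\int_0^t(t-s)^{-a}s^{-b}\,ds = C_{a,b}\,t^{1-a-b}$ for $a=d/(2p)+1/2$, $b=(d/2)(1-1/p)+(d/2)(1-1/p)$... — more carefully, after inserting $\Phi(s)^{1-1/p}\leq(\sup_{[0,t]}\Phi)^{1-1/p}$ and computing the resulting power of $t$, one checks that the powers of $t$ balance so that one obtains an inequality of the form
\[
  \Phi(t)
    \leq \mconst + \mconst\,\Gc_p(T)\,T^{\theta}\,\Bigl(\sup_{[0,t]}\Phi\Bigr)^{1-1/p}
\]
for some $\theta>0$, valid for $t\in[0,T]$. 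Since the exponent $1-1/p<1$, a standard Young-type inequality ($ab^{1-1/p}\leq \delta b + C_\delta a^p$) absorbs the $\sup\Phi$ term and yields $\sup_{[0,T]}\Phi \leq \mcref{cc:bounded}$, with $\mcref{cc:bounded}$ depending only on $\nu$, $T$, $F$ (through $d$ and the ellipticity constant of $\As_F$), and $M$ (through $\Gc_p(T)$ and $\|\pker_t\|_\infty$ bounds), but \emph{not} on $N$.

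The main obstacle, and the point requiring care, is two-fold. First, the bootstrap must start from the qualitative finiteness of $\Phi$ on $[0,T]$ (Lemma~\ref{l:ffinite}) — which gives finiteness but with an exponent $\alpha_0$ that could exceed $d/2$ — and the iteration has to be set up so that it simultaneously (i) is self-consistent once $\Phi$ is known finite, and (ii) drives the exponent down to the sharp $d/2$; this is typically handled by first proving $\sup_{[0,T]}t^{\alpha}\|\ffn(t)\|_\infty<\infty$ for $\alpha$ slightly above $d/2$ via the same computation with the crude bound, then re-running it. Second, one must verify that all the time-integral exponents are genuinely integrable and that the powers of $t$ match up after multiplication by $t^{d/2}$ — this is where the choice $p$ near $1$ is forced and where $d=\dim F$ enters the threshold; the Beta-function bookkeeping is routine but must be done honestly to see that no logarithmic corrections appear and that $\theta>0$. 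Everything else (uniform-in-$N$-ness of $\Gc_p$, the heat-kernel scaling) is already provided by Lemma~\ref{l:Gbound} and Proposition~\ref{p:preg}.
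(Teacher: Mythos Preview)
Your overall strategy---the integral formula \eqref{e:intfkf}, H\"older on the convolution, the interpolation $\|\gfn\ffn\|_{L^p}\leq\|\ffn\|_\infty^{1-1/p}\Gc_p$, and a final Young inequality to absorb the sublinear power of $\sup\Phi$---is exactly the paper's. The gap is in the ``routine bookkeeping'' you defer. Your claim that taking $p$ close to $1$ makes the kernel exponent $d/(2p)+\tfrac12$ strictly less than $1$ is false for every $d\geq2$: as $p\downarrow1$ that exponent tends to $(d+1)/2\geq\tfrac32$. Worse, \emph{no} single choice of $p$ can work once $d\geq3$. Writing $a=d/(2p)+\tfrac12$ for the $(t-s)$-exponent and $b=(d/2)(1-1/p)$ for the $s$-exponent, one has $a+b=(d+1)/2$ \emph{independently of $p$}; hence for $d\geq3$ at least one of $a,b$ is $\geq1$ and the Beta integral $\int_0^t(t-s)^{-a}s^{-b}\,ds$ diverges. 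So the inequality $\Phi(t)\leq c+cT^\theta(\sup\Phi)^{1-1/p}$ cannot be derived from a single H\"older split over the whole interval $[0,t]$.

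The paper repairs this by splitting the time integral at $t-t\epsilon$. On $[0,t-t\epsilon]$ one uses the endpoint $p=1$, i.e.\ $\|\nabla\pker_{t-s}\|_{L^\infty}\|\gfn\ffn\|_{L^1}\leq c(t-s)^{-(d+1)/2}\Gc_1(T)$; the singularity is not integrable at $s=t$, but one is bounded away from it and simply picks up a factor $(t\epsilon)^{-(d-1)/2}$, with \emph{no} dependence on $\Fc$. On $[t-t\epsilon,t]$ one is bounded away from $s=0$, so the $s^{-\alpha/p}$ factor is harmless, and one now takes the conjugate exponent on $\gfn\ffn$ large, namely $q>d$ (equivalently the kernel exponent $p\in(1,\tfrac{d}{d-1})$), so that $d/(2q)+\tfrac12<1$ is integrable at $s=t$. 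The two pieces combine into an inequality of the form
\[
  \Fc_{F,N}^{\beta}(T)\;\leq\;c\,+\,c\,\epsilon^{-(d-1)/2}\Gc_1(T)\,+\,c\,\epsilon^{\frac12-\frac{d}{2q}}\,\Gc_q(T)\,\Fc_{F,N}^{\alpha}(T)^{1/p},
\]
which serves both purposes you identify: with $\alpha=\alpha_0$ from Lemma~\ref{l:ffinite} and $\beta=\tfrac{d}{2}\vee(\tfrac{d}{2q}+\tfrac{\alpha}{p}-\tfrac12)<\alpha$ it drives the exponent down to $d/2$ in finitely many steps, and with $\alpha=\beta=d/2$ your Young-inequality absorption closes the estimate uniformly in $N$.
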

\begin{proof}
  Fix $T>0$ and a solution $u^N$ of \eqref{e:galerkin} with
  initial condition $x_0$, and set $x_0'=\pi_F x_0$.
  By \eqref{e:FKF}, for every $x'\in F$ and $t\in (0,T]$,
  \begin{equation}\label{e:infty0}
    |\ffn(t,x')|
      \leq \|\pker_t\|_{L^\infty}
        + \int_0^t \|\nabla\pker_{t-s}\star(\gfn(s)\ffn(s))\|_{L^\infty}\,ds.
  \end{equation}
  Fix $\epsilon\in(0,\frac12)$ and let $d=\dim F$.
  On the one hand, by Proposition~\ref{p:preg},
  \begin{equation}\label{e:infty1}
    \begin{aligned}
      \int_0^{\mathrlap{t-t\epsilon}}
          \|\nabla\pker_{t-s}\star(\gfn(s)\ffn(s))\|_{L^\infty}\,ds
        &\leq \int_0^{\mathrlap{t-t\epsilon}}
          \|\nabla\pker_{t-s}\|_{L^\infty}\|(\gfn\ffn)(s)\|_{L^1}\,ds\\
        &\leq \Gc_1(T)\int_0^{t-t\epsilon}
          \frac{\mcref{cc:p}}{(t-s)^{\frac{d+1}2}}\,ds\\
        &\leq \mcdef{cc:infty1}(t\epsilon)^{-\frac12(d-1)}\Gc_1(T).
    \end{aligned}
  \end{equation}
  On the other hand, if $\alpha>0$, $p\in(1,\frac{d}{d-1})$,
  and $q$ is such that
  $\frac1p+\frac1q=1$, by the H\"older inequality and again
  Proposition~\ref{p:preg},
  \begin{equation}\label{e:infty2}
    \begin{aligned}
      \int_{\mathrlap{t-t\epsilon}}^t
           \|\nabla\pker_{t-s}\star(\gfn(s)\ffn(s))\|_{L^\infty}\,ds
        &\leq \int_{\mathrlap{t-t\epsilon}}^t
           \|\nabla\pker_{t-s}\|_{L^p}\|(\gfn\ffn)(s)\|_{L^q}\,ds\\
        &\leq \Fc_{F,N}^\alpha(T)^{\frac1p}\Gc_q(T)\int_{t-t\epsilon}^t
           \frac{\mcref{cc:p}}{s^{\frac{\alpha}p}(t-s)^{\frac{d}{2q}+\frac12}}\,ds\\
        &\leq\mcdef{cc:infty2}\epsilon^{\frac12-\frac{d}{2q}}
           t^{\frac12-\frac\alpha{p}-\frac{d}{2q}}
           \Fc_{F,N}^\alpha(T)^{\frac1p}\Gc_q(T).
    \end{aligned}
  \end{equation}
  Take
  $\beta\geq\frac{d}2\vee\bigl(\frac{d}{2q}+\frac\alpha{p}-\frac12\bigr)$,
  then by using together \eqref{e:infty1} and
  \eqref{e:infty2} into \eqref{e:infty0} we obtain
  \begin{equation}\label{e:infty3}
    \begin{multlined}[.9\linewidth]
    \Fc_{F,N}^\beta(T)
      \leq \mcref{cc:p}T^{\beta-\frac{d}2}
        + \mcref{cc:infty1}\epsilon^{-\frac12(d-1)}
           T^{\beta-\frac12(d-1)} \Gc_1(T) + {}\\
        + \mcref{cc:infty2}\epsilon^{\frac12-\frac{d}{2q}}
           T^{\beta+\frac12-\frac\alpha{p}-\frac{d}{2q}}
           \Fc_{F,N}^\alpha(T)^{\frac1p}\Gc_q(T).
    \end{multlined}
  \end{equation}

  We can now prove that $\Fc_{F,N}^{d/2}(T)<\infty$.
  Indeed, let $\alpha_0$ be the exponent given by
  Lemma~\ref{l:ffinite}, so that $\Fc_{F,N}^{\alpha_0}(T)<\infty$.
  If $\alpha_0=\frac{d}2$ there is nothing to prove. Otherwise we
  can take $\alpha_1=\frac{d}2\vee
  \bigl(\frac{d}{2q}+\frac{\alpha_0}{p}-\frac12\bigr)$
  and, by \eqref{e:infty3}, $\Fc_{F,N}^{\alpha_1}(T)<\infty$
  as well.
  By the choice of $p$ (that gives $q>d$), it turns out
  that $\alpha_1<\alpha_0$.
  It is also clear that by iterating the above procedure
  with exponents $\alpha_2$, $\alpha_3$, \dots,
  in a finite number of steps we will obtain the
  exponent $\frac{d}2$ and thus that $\Fc_{F,N}^{d/2}(T)<\infty$.
  
  Finally, we prove the uniform bound on $\Fc_{F,N}^{d/2}(T)<\infty$.
  Consider again \eqref{e:infty3} with $\alpha=\beta=\frac{d}2$,
  and use the Young inequality,
  \[
    \begin{aligned}
      \Fc_{F,N}^{d/2}(T)
        &\leq \mcref{cc:p}
           + \mcref{cc:infty1}\epsilon^{-\frac12(d-1)}\sqrt{T}\Gc_1(T)
           + \mcref{cc:infty2}\epsilon^{\frac12-\frac{d}{2q}}
               \sqrt{T}\Gc_q(T)\Fc_{F,N}^{d/2}(T)^{\frac1p}\\
        &\leq \mcref{cc:p}
           + \mcref{cc:infty1}\epsilon^{-\frac12(d-1)}\sqrt{T}\Gc_1(T)
           + \frac1q\bigl(\mcref{cc:infty2}\epsilon^{\frac12-\frac{d}{2q}}
               \sqrt{T}\Gc_q(T)\bigr)^q
           + \frac1p\Fc_{F,N}^{d/2}(T).
    \end{aligned}
  \]
  Since $p>1$ we deduce that
  $\Fc_{F,N}^{d/2}(T)\leq C(T,\Gc_1(T),\Gc_q(T))$
  and the conclusion of the theorem finally follows.
\end{proof}
The singularity in time of the $L^\infty$ norm
in the previous result clearly originates
only from the singularity in the initial condition.
It is reasonable then that, when we look for an
initial distribution for $u^N$ with a smoother law,
the same result should hold with a smaller power for
the time. As an example, we give
a direct proof of the $L^\infty$ bound of the density
of the (unique, see \cite{Rom2004}) invariant measure.
Denote by $k_{F,N}$ its density, then as in
Section~\ref{s:formulation}, the density satisfies
the equation
\[
  \frac12\As_F k_{F,N}
      + \Div(\gfn k_{F,N})
    = 0.
\]
Notice that when the averaging in \eqref{e:G} is done with
respect to the invariant measure, $\gfn$ does not depend on
time. Lemma~\ref{l:Gbound} holds though and
$\gfn\in L^p(F;k_{F,N}\,dx')$ for every $p\geq1$.
\begin{proposition}
  Under assumptions \eqref{a:diagonal} and \eqref{a:nondegenerate},
  given a subspace $F$ of $H$ satisfying
  \eqref{a:finitespan} and \eqref{a:Fnondegenerate},
  and $N\geq1$ large enough, let
  $k_{F,N}$ be the density of the unique invariant measure
  of \eqref{e:galerkin}. Then there is $\mcdef{cc:stat}>0$ depending
  only on $\nu$, $F$ and some polynomial moment of the invariant
  measure in $H$ such that $\|k_{F,N}\|_{L^\infty}\leq\mcref{cc:stat}$.
\end{proposition}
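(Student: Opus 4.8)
The plan is to mimic the proof of Proposition~\ref{p:bounded}, but starting from the stationary (time--independent) integral equation rather than the parabolic one, so that no singularity in $t$ appears and the resulting bound depends only on $\nu$, $F$ and polynomial moments in $H$. First I would note that, since $k_{F,N}$ is a probability density, $\|k_{F,N}\|_{L^1}=1$; and that, exactly as recorded after the statement, Lemma~\ref{l:Gbound} applies with the averaging done against the invariant measure, so $\Gc_p\eqdef\bigl(\int_F|\gfn|^p k_{F,N}\,dx'\bigr)^{1/p}<\infty$ for every $p\ge1$, with a bound depending only on $\nu$, $F$ and a polynomial moment of the invariant measure in $H$ (this uses the uniform--in--$N$ moment bounds for the stationary solutions, which follow from \eqref{e:polymoment} applied to the invariant measure).

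Next I would write the stationary integral identity: since $\frac12\As_F k_{F,N}=-\Div(\gfn k_{F,N})$ and $\As_F$ is elliptic with constant coefficients, representing $k_{F,N}$ via the (time--integrated) kernel $\pker$ and using that $\pker_t$ decays to the constant as $t\to\infty$ gives, for a suitable truncation,
\[
  k_{F,N}(x')
    = \int_0^\infty \bigl(\nabla_F\pker_t\star(\gfn k_{F,N})\bigr)(x')\,dt
      + \text{(a harmless bounded term)} ;
\]
more robustly, one can avoid the convergence discussion altogether by running the evolution argument of Proposition~\ref{p:bounded} for the \emph{stationary} solution, whose law at every time is the invariant measure, so $\ffn(t)\equiv k_{F,N}$ and $\gfn(t)\equiv\gfn$ are constant in $t$. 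Then \eqref{e:infty3} with $\alpha=\beta=0$ reads (after dropping the now--superfluous $T$--powers, because no $t$--singularity is present)
\[
  \|k_{F,N}\|_{L^\infty}
    \leq \mconst\,\epsilon^{-\frac12(d-1)}\Gc_1
       + \mconst\,\epsilon^{\frac12-\frac{d}{2q}}\Gc_q\,\|k_{F,N}\|_{L^\infty}^{1/p},
\]
where $p\in(1,\tfrac{d}{d-1})$ and $q$ is the conjugate exponent (so $q>d$, hence $\tfrac12-\tfrac{d}{2q}>0$ and the $\epsilon$--power is positive). By Young's inequality the last term is bounded by $\tfrac1q(\mconst\epsilon^{\frac12-d/(2q)}\Gc_q)^q + \tfrac1p\|k_{F,N}\|_{L^\infty}$, and absorbing the $\tfrac1p$--term (legitimate since $\|k_{F,N}\|_{L^\infty}<\infty$ by Lemma~\ref{l:ffinite} applied to the stationary density, or by the smoothness from Theorem~\ref{t:malliavin}) yields
\[
  \|k_{F,N}\|_{L^\infty}
    \leq \mconst\bigl(\Gc_1 + \Gc_q^q\bigr)
    \leq \mcref{cc:stat},
\]
with $\mcref{cc:stat}$ depending only on $\nu$, $F$ and polynomial moments of the invariant measure in $H$, uniformly in $N$.

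The main obstacle is the same absorption step as in Proposition~\ref{p:bounded}: to move the $\tfrac1p\|k_{F,N}\|_{L^\infty}$ term to the left one must \emph{a priori} know this quantity is finite, which is exactly what Lemma~\ref{l:ffinite} (via Theorem~\ref{t:malliavin}) supplies for the Galerkin level. A minor additional point worth checking is that the kernel $\pker$ used for the stationary equation behaves well at $t=\infty$ when $d\le 2$ (where $\int^\infty\|\nabla\pker_t\|_{L^p}\,dt$ could fail to converge); this is why it is cleaner to argue through the stationary solution of the evolution equation on a fixed finite interval $[0,T]$ — the estimates \eqref{e:infty1}–\eqref{e:infty2} are then used with $\ffn(s)\equiv k_{F,N}$, $\gfn(s)\equiv\gfn$, every occurrence of $\Fc_{F,N}^\alpha$ becomes $\|k_{F,N}\|_{L^\infty}$ times a power of $T$, and choosing $T$ of order one eliminates all spurious constants. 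Everything else is the routine heat--kernel bookkeeping already carried out in Proposition~\ref{p:preg} and Proposition~\ref{p:bounded}.
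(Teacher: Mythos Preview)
Your argument is correct, but it takes a different route from the paper. The paper works directly with the \emph{elliptic} equation $\tfrac12\As_F k_{F,N}+\Div(\gfn k_{F,N})=0$: it introduces the Green function $g_F$ of $\As_F$, writes $k_{F,N}=-(\nabla g_F)\star(\gfn k_{F,N})$, and splits the convolution into the contributions from $B_\epsilon(x')$ and its complement. Using $|\nabla g_F(x)|\le c|x|^{1-d}$ and the interpolation $\|\gfn k_{F,N}\|_{L^r}\le\Gc_r\|k_{F,N}\|_{L^\infty}^{1-1/r}$, both pieces are bounded by a constant times $\|k_{F,N}\|_{L^\infty}^{(d-1)/d}$ after choosing $\epsilon=\|k_{F,N}\|_{L^\infty}^{-1/d}$; dividing through (legitimate since $\|k_{F,N}\|_{L^\infty}$ is finite and positive) gives the bound. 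Your approach is instead \emph{parabolic}: you feed the stationary density into the evolution identity on a finite interval and recycle the splitting \eqref{e:infty1}--\eqref{e:infty2}, then absorb via Young's inequality. Both methods need the same a~priori finiteness of $\|k_{F,N}\|_{L^\infty}$. Your route has the advantage that it literally reuses Proposition~\ref{p:bounded} and sidesteps any low--dimensional issues with the Green function at infinity (which you correctly flag); the paper's route is shorter and makes the stationarity manifest.

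One point to tighten: you cannot simply invoke \eqref{e:infty3} ``with $\alpha=\beta=0$'', because that inequality was derived under the constraint $\beta\ge\tfrac{d}{2}$ and with a Dirac initial condition. What you actually need (and what your last paragraph in effect says) is the analogue of \eqref{e:infty0} with the initial term $\pker_t(x'-x_0')$ replaced by $(\pker_T\star k_{F,N})(x')$, which contributes the harmless constant $\|\pker_T\|_{L^\infty}\le cT^{-d/2}$; then \eqref{e:infty1}--\eqref{e:infty2} go through verbatim with $\ffn\equiv k_{F,N}$ and no $s^{-\alpha/p}$ factor, and your displayed inequality (with an additional additive constant) follows for any fixed $T>0$ and $\epsilon\in(0,\tfrac12)$.
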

\begin{proof}
  Let $d=\dim F$ and denote by $g_F$ the Green function
  of $\As_F$. As in Proposition~\ref{p:preg}, $g_F$ can be
  obtained by an invertible linear transformation from
  the Poisson kernel. Hence $\nabla g_F\leq\mcdef{cc:poisson}|x|^{1-d}$.
  We have
  \[
    \begin{aligned}
      k_{F,N}(x')
        &= - (\nabla g_F)\star(\gfn k_{F,N})(x')\\
        &= -\bigl(\nabla g_F \uno_{B_\epsilon(x')}\bigr)\star(\gfn k_{F,N})(x')
           -\bigl(\nabla g_F \uno_{B_\epsilon^c(x')}\bigr)\star(\gfn k_{F,N})(x')\\
        &= \memo{\,i\,} + \memo{o},
    \end{aligned}
  \]
  where $\epsilon>0$ will be chosen later. Fix $q>d$ and let $p$ be
  its H\"older conjugate exponent. Then
  \[
    |\memo{\,i\,}|
      \leq \|\nabla g_F\|_{L^p(B_\epsilon(0)}\|\gfn k_{F,N}\|_{L^q}
      \leq \mcdef{cc:istat}\epsilon^{\frac{d}{p}-(d-1)}\Gc_q
        \|k_{F,N}\|_{L^\infty}^{1-\frac1q},
  \]
  and likewise
  \[
    |\memo{o}|
      \leq \|\nabla g_F\|_{L^q(B_\epsilon^c(0)}\|\gfn k_{F,N}\|_{L^p}
      \leq \frac{\mcdef{cc:ostat}}{\epsilon^{(d-1) - \frac{d}{q}}}\Gc_p
        \|k_{F,N}\|_{L^\infty}^{1-\frac1p}.
  \]
  where $\Gc_p$, $\Gc_q$ are the quantities in \eqref{e:Gbound}
  but computed on the stationary solution (so there is no need to
  evaluate the supremum in time).

  Collecting the two estimates above and
  choosing $\epsilon = \|k_{F,N}\|_{L^\infty}^{-1/d}$
  yields
  \[
    \begin{aligned}
      |k_{F,N}(x')|
        &\leq \mcref{cc:istat}\epsilon^{\frac{d}{p}-(d-1)}\Gc_q
                \|k_{F,N}\|_{L^\infty}^{1-\frac1q}
              + \frac{\mcref{cc:ostat}}{\epsilon^{(d-1) - \frac{d}{q}}}\Gc_p
                \|k_{F,N}\|_{L^\infty}^{1-\frac1p}\\
        &\leq \mconst(\Gc_q + \Gc_p)\|k_{F,N}\|_\infty^{\frac{d-1}{d}},
    \end{aligned}
  \]
  and hence the statement of the proposition, since $\|k_{F,N}\|_{L^\infty}$
  is non--zero and finite by Lemma~\ref{l:ffinite}.
\end{proof}
Clearly both results above immediately extend to solutions
of the infinite dimensional problem that are limit point of Galerkin
approximations as in Corollary~\ref{c:bbesov}.
%%
%%%%%%%%%%%%%%%%%%%%%%%%%%%%%%%%%%
\subsection{H\"older regularity}

Using the boundedness of the densities proved so far
we can give an estimate on $\gfn$ different from \eqref{e:Gbound}.
Indeed, if $p>1$ and $t>0$, 
\begin{equation}\label{e:Gboundplus}
  \begin{aligned}
    \|\gfn(t)\ffn(t)\|_{L^p}
      &= \int_F |\gfn(t,x')\ffn(t,x')|^p\,dx'\\
      &\leq \|\ffn(t)\|_{L^\infty}^{p-1}\int_F|\gfn(t,x')|^p\ffn(t,x')\,dx'\\
      &\leq\Gc_p(t)\Fc_{F,N}^{d/2}(t)^{\frac1q}t^{-\frac{d}{2q}},
  \end{aligned}
\end{equation}
where $q$ is the conjugate H\"older exponent of $p$.
In particular it is elementary to verify that
$\gfn\ffn\in L^r([0,T];L^p(F))$ for every
$T>0$ and every $p,r\geq1$, with $\frac2r+\frac{d}{p}>d$.
In other words $\gfn\ffn$ has classical summability.
\begin{proposition}\label{p:holder}
  Under assumptions \eqref{a:diagonal} and \eqref{a:nondegenerate},
  given a subspace $F$ of $H$ satisfying
  \eqref{a:finitespan} and \eqref{a:Fnondegenerate},
  $\alpha\in(0,1)$, $T>0$
  and $M>0$, there is $\mcdef{cc:holder}=
  \mcref{cc:holder}(\nu,\alpha,T,F,M)>0$ such that if $N$ is large
  enough (that $F\subset H_N$), and if $u^N$ is a solution of
  \eqref{e:galerkin} with $\|u^N(0)\|_H\leq M$, then
  \[
    \sup_{t\in [0,T]} t^{\frac12(d+\alpha)}\|\ffn(t)\|_{C^\alpha_b}
      \leq \mcref{cc:holder},
  \]
  where $d=\dim F$ and $\ffn(\cdot)$ is the density of $u^N(\cdot)$.
\end{proposition}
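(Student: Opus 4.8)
The plan is to bootstrap directly on the integral identity \eqref{e:intfkf}, now that Proposition~\ref{p:bounded} supplies the uniform bound $\Fc_{F,N}^{d/2}(T)\le\mcref{cc:bounded}$ and hence, through \eqref{e:Gboundplus}, genuine $L^q$ control of the source term $\gfn\ffn$. Since $C^\alpha_b(F)=B^\alpha_{\infty,\infty}(F)$ for $\alpha\in(0,1)$, it suffices to bound $t^{\frac12(d+\alpha)}\|\ffn(t)\|_{L^\infty}$ and $t^{\frac12(d+\alpha)}[\ffn(t)]_{C^\alpha_b}$ separately. The first is immediate: $t^{\frac12(d+\alpha)}\|\ffn(t)\|_{L^\infty}=t^{\alpha/2}\bigl(t^{d/2}\|\ffn(t)\|_{L^\infty}\bigr)\le T^{\alpha/2}\mcref{cc:bounded}$ on $(0,T]$ by Proposition~\ref{p:bounded}, so all the content lies in the H\"older semi-norm.

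For the semi-norm I would fix $h\in F$ with $|h|\le1$ and an integer $n>\alpha$ (one may take $n=1$), apply $\Delta_h^n$ to \eqref{e:intfkf}, and estimate the two resulting terms in $L^\infty$. The initial-datum term is handled directly by Proposition~\ref{p:preg} with $p=\infty$ ($q=1$): with the elementary inequality $(1\wedge r)^n\le r^\alpha$ for $r\ge0$ (valid because $n\ge1>\alpha$) one gets $\|\Delta_h^n\pker_t\|_{L^\infty}\le\mcref{cc:p}\,t^{-d/2}(|h|/\sqrt t)^\alpha=\mcref{cc:p}\,t^{-\frac12(d+\alpha)}|h|^\alpha$, which is already of the claimed order.

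For the convolution term $\int_0^t(\Delta_h^n\nabla\pker_{t-s})\star(\gfn(s)\ffn(s))\,ds$ I would split at $s=t/2$, mirroring the proof of Proposition~\ref{p:bounded}. On $(0,t/2)$, where $t-s$ is comparable to $t$, I would use Young's inequality with the uniform bound $\|\gfn(s)\ffn(s)\|_{L^1}\le\Gc_1(T)$ from Lemma~\ref{l:Gbound} and $\|\Delta_h^n\nabla\pker_{t-s}\|_{L^\infty}\le\mcref{cc:p}(t-s)^{-(d+1)/2}(1\wedge|h|/\sqrt{t-s})^n$; extracting $|h|^\alpha$ via $(1\wedge r)^n\le r^\alpha$ leaves $\int_0^{t/2}(t-s)^{-(d+1+\alpha)/2}\,ds$, which by scaling contributes a factor of order $t^{-(d+\alpha-1)/2}$ (with a constant depending on $d,\alpha$, since $d+\alpha>1$). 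On $(t/2,t)$, where $s$ is comparable to $t$ so that $\|\ffn(s)\|_{L^\infty}\le 2^{d/2}\mcref{cc:bounded}\,t^{-d/2}$, I would use Young's inequality with a conjugate pair $(p,q)$ chosen with $q>d/(1-\alpha)$, pairing $\|\Delta_h^n\nabla\pker_{t-s}\|_{L^p}\le\mcref{cc:p}(t-s)^{-(\frac{d}{2q}+\frac12)}(1\wedge|h|/\sqrt{t-s})^n$ with the bound $\|\gfn(s)\ffn(s)\|_{L^q}\le\Gc_q(T)\,\mcref{cc:bounded}^{1/p}\,s^{-d/(2p)}$ coming from \eqref{e:Gboundplus} (applied with exponent $q$) and Proposition~\ref{p:bounded}. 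Extracting $|h|^\alpha$ again, the time integral $\int_{t/2}^t(t-s)^{-(\frac{d}{2q}+\frac12+\frac\alpha2)}\,ds$ converges at $s=t$ precisely because $q>d/(1-\alpha)$ keeps its exponent below $1$, and combined with $s^{-d/(2p)}\le 2^{d/(2p)}t^{-(\frac d2-\frac d{2q})}$ it again scales like $t^{-(d+\alpha-1)/2}$ (from the identity $1-\bigl(\tfrac d{2q}+\tfrac12+\tfrac\alpha2\bigr)-\bigl(\tfrac d2-\tfrac d{2q}\bigr)=-\tfrac12(d+\alpha-1)$). Summing the three contributions and multiplying by $t^{\frac12(d+\alpha)}$, which turns each $t^{-(d+\alpha-1)/2}$ into $t^{1/2}\le T^{1/2}$, yields $t^{\frac12(d+\alpha)}\|\Delta_h^n\ffn(t)\|_{L^\infty}\le\mcref{cc:holder}|h|^\alpha$ uniformly in $h$, which is the proposition.

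I expect no real obstacle, since the serious estimate (boundedness) is already in hand via Proposition~\ref{p:bounded}; the only delicate point is the exponent bookkeeping, namely arranging that both time integrals converge while reproducing exactly the power $t^{-\frac12(d+\alpha)}$. The constraint that surfaces, $q>d/(1-\alpha)$ (equivalently $p<d/(d-1+\alpha)$), is harmless for every fixed $\alpha<1$ but degenerates as $\alpha\uparrow1$; this is exactly why the statement is restricted to $\alpha<1$ and $\mcref{cc:holder}$ must depend on $\alpha$, in accordance with the discussion around Proposition~\ref{p:bbesov} that the endpoint of the $L^1$ source estimate is $B^1_{1,\infty}$ rather than $C^1_b$. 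As in Proposition~\ref{p:bounded}, the split at $t/2$ trades the uniform but low-integrability bound of Lemma~\ref{l:Gbound} (used near $s=0$, where $\ffn$ may be singular) against the higher-integrability bound \eqref{e:Gboundplus} (used near $s=t$, which relies on the boundedness just established).
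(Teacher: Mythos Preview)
Your proposal is correct and follows essentially the same route as the paper's proof: apply $\Delta_h$ to the integral identity~\eqref{e:intfkf}, split the time integral into a ``far'' part handled by the $L^1$ bound of Lemma~\ref{l:Gbound} and a ``near'' part handled by the $L^q$ bound~\eqref{e:Gboundplus} coming from Proposition~\ref{p:bounded}, and verify that both pieces scale like $t^{-(d+\alpha-1)/2}|h|^\alpha$. The only cosmetic differences are that the paper splits at $t-t\epsilon$ rather than $t/2$, takes the endpoint value $q=d/(1-\alpha)$ (extracting $|h|^{1-d/q}=|h|^\alpha$ from the time integral of $(1\wedge|h|/\sqrt{t-s})$ rather than pointwise via $(1\wedge r)\le r^\alpha$), and thereby avoids your strict inequality $q>d/(1-\alpha)$; neither change affects the argument.
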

\begin{proof}
  Fix $\alpha\in(0,1)$, $T>0$ and a solution $u^N$ of \eqref{e:galerkin}
  with initial condition $x_0$, and set $x_0'=\pi_F x_0$.
  By \eqref{e:FKF}, for every $x'\in F$, $t\in (0,T]$ and $h\in F$
  with $|h|\leq 1$,
  \[
  %%
  %\begin{equation}\label{e:infty0}
    \|\Delta_h\ffn(t)\|_{L^\infty}
      \leq \|\Delta_h\pker_t\|_{L^\infty}
        + \int_0^t \|(\Delta_h\nabla\pker_{t-s})\star(\gfn(s)\ffn(s))\|_{L^\infty}\,ds.
  %\end{equation}
  %%
  \]
  Fix $\epsilon\in(0,\frac12]$
  and set $q=\frac{d}{1-\alpha}$.
  On the one hand, by Proposition~\ref{p:preg},
  \[
  %%
  %\begin{equation}\label{e:infty1}
    \begin{aligned}
      \int_0^{\mathrlap{t-t\epsilon}}
          \|(\Delta_h\nabla\pker_{t-s})\star(\gfn(s)\ffn(s))\|_{L^\infty}\,ds
        &\leq \int_0^{\mathrlap{t-t\epsilon}}
          \|\Delta_h\nabla\pker_{t-s}\|_{L^\infty}\|(\gfn\ffn)(s)\|_{L^1}\,ds\\
        &\leq \Gc_1(T)|h|^\alpha\int_0^{t-t\epsilon}
          \frac{\mcref{cc:p}}{(t-s)^{\frac{d+1+\alpha}2}}\,ds\\
        &\leq \mcdef{cc:holder1}(t\epsilon)^{-\frac{d+\alpha-1}2}\Gc_1(T)|h|^\alpha.
    \end{aligned}
  %\end{equation}
  %%
  \]
  On the other hand, by the H\"older inequality, the estimate
  \eqref{e:Gboundplus} and again Proposition~\ref{p:preg},
  \[
  %%
  %\begin{equation}\label{e:infty2}
    \begin{multlined}[.95\linewidth]
      \int_{\mathrlap{t-t\epsilon}}^t
          \|(\Delta_h\nabla\pker_{t-s})\star(\gfn\ffn\|_{L^\infty}\,ds
        \leq \int_{\mathrlap{t-t\epsilon}}^t
          \|\Delta_h\nabla\pker_{t-s}\|_{L^p}\|\gfn\ffn\|_{L^q}\,ds\leq\\
        \leq \Fc_{F,N}^{d/2}(T)^{\frac1p}\Gc_q(T)\int_{t-t\epsilon}^t
          \frac{\mcref{cc:p}}{s^{\frac{d}{2p}}(t-s)^{\frac{d}{2q}+\frac12}}
          \Bigl(1\wedge\frac{|h|}{\sqrt{t-s}}\Bigr)\,ds\leq\\
        \leq\mcdef{cc:holder2}t^{-\frac{d}2(1-\frac1q)}
          \Fc_{F,N}^{d/2}(T)^{\frac1p}\Gc_q(T)|h|^{1-\frac{d}{q}},
    \end{multlined}
  %\end{equation}
  %%
  \]
  where $p$ is the conjugate H\"older exponent of $q$.
  Since by definition $1-\frac{d}{q}=\alpha$, the
  two estimates above together yield
  \[
    \begin{multlined}[.9\linewidth]
      \|\Delta_h\ffn(t)\|_{L^\infty}
        \leq \mcref{cc:p}t^{-\frac12(d+\alpha)}|h|^\alpha
          + \mcref{cc:holder1}(t\epsilon)^{-\frac12(d+\alpha-1)}
              \Gc_1(T)|h|^\alpha + {}\\
          + \mcref{cc:holder2}t^{-\frac12(d+\alpha-1)}\Gc_q(T)
              \Fc_{F,N}^{d/2}(T)^{\frac1p}|h|^\alpha,
    \end{multlined}
  \]
  and therefore
  \[
    \sup_{[0,T]}(t^{\frac12(d+\alpha)}[\ffn]_{B^\alpha_{\infty,\infty}})
      \leq \mconst\bigl(1 + \sqrt{T}\Gc_1(T)+\sqrt{T}
        \Gc_{\frac{d}{1-\alpha}}(T)
        \Fc_{F,N}^{d/2}(T)^{\frac{d+\alpha-1}{d}}\bigr).
  \]
  The conclusion follows since $B^\alpha_{\infty,\infty}=C_b^\alpha$.
\end{proof}
\begin{proof}[Proof of Theorem~\ref{t:main}]
    Let $u$ be a solution of \eqref{e:nseabs} according to
    Definition~\ref{d:sol}. Then $u$ is a limit point,
    in distribution, of the sequence $(u^N)_{N\geq1}$
    of solutions of \eqref{e:galerkin}, and the density
    $\ff$ of $\pi_F u$ is a limit point for the weak
    convergence in $L^1$ of $(\ffn)_{N\geq1}$
    by Corollary~\ref{c:bbesov}.

    Proposition~\ref{p:holder} above states that the
    sequence $(\ffn)_{N\geq1}$ is bounded in $C_b^\alpha(F)$,
    hence by the Ascoli--Arzelà theorem each sub--sequence
    admits a further sub--sequence converging uniformly on
    compact sets of $F$ (and weakly in $L^1$ since it is also
    uniformly integrable) to a $C_b^\alpha(F)$ function.
    This proves the theorem.
\end{proof}
%%
%%%%%%%%%%%%%%%%%%%%%%%%%%%%%%%%%%%%%%%%%%%%%%%%%%%%%%%%%%%%%%%%%%%%%%%%%%%%
%%
%%
%%
%%%%%%%%%%%%%%%%%%%%%%%%%%%%%%%%%%%%%%%%%%%%%%%%%%%%%%%%%%%%%%%%%%%%%%%%%%%%
\appendix
\section{Technical estimates}

In this appendix we derive some (non--uniform in $N$) quantitative bounds
on the density $\fn$ of $u^N$ that are necessary as an intermediate step
in the proof of our main result. To this end we recall that the solutions
of the Galerkin approximations \eqref{e:galerkin} satisfy
\begin{equation}\label{e:polymoment}
  \E\Bigl[\sup_{[0,T]}\|u^N(t)\|_H^{2p}
      + \nu\int_0^T \|u^N\|_V^2\|u^N\|_H^{2p-2}\,dt\Bigr]
    \leq \mcdef{cc:poly}(p,\nu,T,\|u^N(0)\|_H,\cov),
\end{equation}
for every $N\geq1$, $u^N(0)\in H$, $T>0$ and $p\geq1$, and the number
on the right hand side depends on $N$ only through $\|u^N(0)\|_H$.
In fact there is a stronger estimate: there is $\lambda>0$ such
that for every $N\geq1$, $u^N(0)\in H$ and $T>0$,
\begin{equation}\label{e:expomoment}
  \E\Bigl[\exp\Bigl(\lambda\sup_{[0,T]}\|u^N\|_H^2
      + \nu\lambda\int_0^T\|u^N\|_V^2\,ds\Bigl)\Bigr]
    \leq\mcdef{cc:expo}(\nu,T,\|u^N(0)\|_H,\cov),
\end{equation}
See for instance \cite{Fla2008} for details.

The main point in our estimates of Section~\ref{s:holder} is that,
in order to prove uniform (in $N$) bounds in $L^\infty$ for the
marginal density $\ffn$, we already need to know that $\ffn$ is regular.
This in principle does not follow immediately from
the regularity of $\fn$ (see Example~\ref{x:needschwartz} below)
unless the joint density $\fn$ is in the Schwartz space, as shown in
the result below.
\begin{theorem}\label{t:schwartz}
  Under assumptions \eqref{a:diagonal} and \eqref{a:nondegenerate},
  for every $x_0\in H_N$ the density
  $\fn$ of the solution $u^N$ of \eqref{e:galerkin}, with initial
  condition $x_0$, is in the Schwartz space.
\end{theorem}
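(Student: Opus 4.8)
The plan is to combine the hypo-ellipticity of the Galerkin system~\eqref{e:galerkin} — which follows from assumptions \eqref{a:diagonal} and \eqref{a:nondegenerate} via the analysis of~\cite{Rom2004} — with the polynomial and exponential moment bounds \eqref{e:polymoment}–\eqref{e:expomoment}. Smoothness of $\fn(t,\cdot)$ for each fixed $t>0$ is the classical Hörmander theorem applied to the diffusion $u^N$: the drift $-\nu A_N x - B_N(x)$ is a polynomial vector field and the diffusion coefficient $\pi_N\cov$ is constant, so under \eqref{a:nondegenerate} the iterated Lie brackets of the noise directions with the drift span $H_N$ at every point (this is precisely what is verified in~\cite{Rom2004} for $N$ large), whence the transition law has a $C^\infty$ density. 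So the real content is the decay at infinity, i.e.\ that $\fn(t,\cdot)$ together with all its derivatives decays faster than any polynomial.

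First I would establish the weighted $L^\infty$ bounds on $\fn$ itself. To control $\sup_x (1+|x|^k)\fn(t,x)$, I would use a representation of $\fn$ in terms of the (smooth, rapidly decaying in the right variables) heat-type kernel of the hypo-elliptic generator together with the moment estimates: heuristically, $\int_{H_N}(1+|x|^k)\fn(t,x)\,dx = \E[(1+\|u^N(t)\|_H^k)]$ is finite and bounded uniformly on $[t_0,T]$ for each $t_0>0$ by \eqref{e:polymoment}, and an interpolation between this integral bound and local sup-bounds coming from parabolic (hypo-elliptic) regularity theory upgrades this to a pointwise weighted $L^\infty$ estimate. The cleanest route is probably via the known Gaussian-type (or sub-Gaussian, adapted to the anisotropic scaling of the hypo-elliptic operator) upper bounds for the density of a hypo-elliptic diffusion with polynomially growing drift, combined with the exponential moment \eqref{e:expomoment} to kill any polynomial weight; this is exactly the type of statement recorded in Theorem~\ref{t:malliavin}, so I expect this theorem to be proved in tandem with, or as a consequence of, that one.

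Next I would promote the bound from $\fn$ to its derivatives. Here I would differentiate the evolution equation \eqref{e:FKN}: each spatial derivative $D^\beta \fn$ solves a linear hypo-elliptic equation whose coefficients are (polynomially growing) derivatives of the drift, so by the same parabolic hypo-elliptic a priori estimates the local sup-norm of $D^\beta\fn$ on a ball is controlled by an $L^1$ (or $L^2$) norm of lower-order derivatives on a slightly larger ball and earlier time, and then an induction on $|\beta|$ — each step of which borrows one extra power of the weight from the already-established decay of lower derivatives and from the moment bounds — yields $\sup_x(1+|x|^k)|D^\beta\fn(t,x)|<\infty$ for all $k$ and $\beta$. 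That is exactly the statement that $\fn(t,\cdot)$ lies in the Schwartz space.

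The main obstacle I anticipate is the interplay between the \emph{degenerate} (hypo-elliptic rather than elliptic) nature of the generator and the \emph{unbounded} polynomial drift $B_N$: standard off-the-shelf Gaussian density bounds are stated for elliptic operators or for bounded drifts, so the weight-gaining argument has to be done with care, respecting the anisotropic parabolic scaling of the sub-Laplacian (different homogeneities in the directly-forced directions versus the bracket-generated ones) and using the exponential moment \eqref{e:expomoment} rather than polynomial moments alone to absorb the super-linear growth of the drift when integrating the kernel against polynomial weights. Once that weighted $L^\infty$ bound on $\fn$ is in hand, propagating it to all derivatives is routine bootstrap via hypo-elliptic Schauder/energy estimates.
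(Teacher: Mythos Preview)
Your outline is a PDE/hypo-elliptic regularity approach, whereas the paper proves this theorem entirely through Malliavin calculus. The paper's argument is a single invocation of \cite[Proposition~2.1.5]{Nua2006}: once one knows that $u^N(t)$ is a \emph{non-degenerate} random variable in the Malliavin sense --- i.e.\ $u^N(t)\in\mathbb{D}^\infty$ and $(\det\mathcal{M}_{N,t})^{-1}\in L^p$ for all $p\geq1$ --- Schwartz-space membership of the density is automatic. The first condition is verified by adapting \cite[Theorem~2.2.2]{Nua2006}, replacing boundedness of coefficient derivatives by \eqref{e:polymoment}--\eqref{e:expomoment}; the second is the quantitative H\"ormander estimate \cite[Theorem~2.3.3]{Nua2006}, which holds because the bracket condition is exactly \cite[Lemma~4.2]{Rom2004} under \eqref{a:diagonal}--\eqref{a:nondegenerate}. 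Smoothness and polynomial decay of $\fn$ \emph{and all its derivatives} come out in one stroke; there is no separate bootstrap.

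Your route, by contrast, has a real gap at the step you yourself flag. The interpolation you sketch --- combining the weighted $L^1$ bound $\E[(1+\|u^N(t)\|^k)]<\infty$ with local hypo-elliptic sup-bounds to get a weighted $L^\infty$ bound --- requires the local regularity constant to be uniform in the centre of the ball. But for a hypo-elliptic operator whose drift $-\nu A_N x - B_N(x)$ has quadratic growth, the constants in any local Schauder or Harnack-type estimate depend on the sup of the coefficients over the ball, hence grow polynomially in $|x_0|$; this loss competes directly with the weight you are trying to gain, and it is not clear (and not standard) that the balance closes. Off-the-shelf Gaussian-type density bounds for hypo-elliptic diffusions are stated for bounded drift, and extending them to polynomial drift is essentially the whole difficulty, not a side remark. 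The same issue recurs in your derivative bootstrap, where each differentiation of \eqref{e:FKN} throws in another polynomially-growing lower-order term. The Malliavin route avoids all of this because the integration-by-parts machinery converts spatial decay and derivative decay into moment conditions on $\mathcal{M}_{N,t}^{-1}$ and on the iterated Malliavin derivatives of $u^N(t)$, both of which are controlled globally by \eqref{e:expomoment} without ever localising in $x$.
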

\begin{proof}
  The conclusion follows immediately from \cite[Proposition 2.1.5]{Nua2006}
  once we show that $u^N(t)$ is a non--degenerate random variable
  (see \cite[Definition 2.1.1]{Nua2006}), namely that
  $u^N(t)\in\mathbb{D}^\infty$, where $\mathbb{D}^\infty$ is the space
  of random variable with Malliavin derivatives of arbitrary order,
  and that $(\det\mathcal{M}_{N,t})^{-1}\in L^p$ for every $p\geq1$,
  where $\mathcal{M}_{N,t}$ is the Malliavin matrix of $u^N(t)$.
  
  The fact that $u^N(t)\in\mathbb{D}^\infty$ for every $t>0$ can be
  proved as in \cite[Theorem 2.2.2]{Nua2006}. The quoted theorem
  requires that the coefficients should have bounded derivatives
  of any order larger or equal than one. The coefficients of our
  equation \eqref{e:galerkin} are second order polynomials, and
  it is not difficult to verify, by going through the proof
  of the quoted theorem, that the boundedness condition can be
  replaced by \eqref{e:polymoment} to prove existence and
  uniqueness, and by \eqref{e:expomoment} to prove
  existence of the Malliavin derivatives, since the equations
  for the Malliavin derivatives are linear.
   
  Finally, the fact that the inverse of the Malliavin matrix
  has all polynomial moments finite
  follows as in \cite[Theorem 2.3.3]{Nua2006} (where again
  we use \eqref{e:expomoment} to replace the boundedness
  condition on the derivatives), once the H\"ormander condition
  (see \cite[Section 2.3.2]{Nua2006}) holds. Under
  assumptions \eqref{a:diagonal} and \eqref{a:nondegenerate}
  the H\"ormander condition follows from \cite[Lemma 4.2]{Rom2004}.
\end{proof}
From the previous theorem we immediately deduce the following
consequence, whose proof is based on elementary computations.
Notice that, in view of Example~\ref{x:needschwartz},
the fact that the density $\fn$ is in the Schwartz space
is crucial for the proof of boundedness of $\ffn$.
\begin{corollary}
  Under the same assumptions of the previous theorem, the marginal
  density $\ffn$ is the Schwartz space.
\end{corollary}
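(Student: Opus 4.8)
\emph{Setup.} The claim is that for each fixed $t>0$ the marginal $x'\mapsto\ffn(t,x')=\int_{\Fperp}\fn(t,x'+x'')\,dx''$ lies in $\Sc(F)$, knowing from Theorem~\ref{t:schwartz} that $\fn(t,\cdot)\in\Sc(H_N)$. This is the elementary fact that integrating a Schwartz function over a linear subspace produces a Schwartz function on the complementary subspace, and I would prove it directly and pointwise in $t$. Fix the orthogonal splitting $H_N=F\oplus\Fperp$, write $x=(x',x'')$ with $x'\in F$, $x''\in\Fperp$, and set $d=\dim F$, $m=\dim\Fperp$, so that $\|x\|_H^2=|x'|^2+|x''|^2$.

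\emph{Smoothness.} First I would justify differentiation under the integral sign. For any multi-index $\gamma$ in the variable $x'$, the function $\partial^\gamma_{x'}\fn(t,\cdot)$ is again Schwartz, so for every $k\ge0$ there is $M_{k,\gamma}(t)$ with $|\partial^\gamma_{x'}\fn(t,x)|\le M_{k,\gamma}(t)(1+\|x\|_H^2)^{-k/2}$ for all $x\in H_N$. Taking $k=m+1$ and using $1+\|x\|_H^2\ge1+|x''|^2$, the majorant is dominated, uniformly in $x'$, by the integrable function $x''\mapsto M_{m+1,\gamma}(t)(1+|x''|^2)^{-(m+1)/2}$ on $\Fperp$. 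By the standard theorem on differentiation under the integral sign, $\ffn(t,\cdot)\in C^\infty(F)$ and $\partial^\gamma_{x'}\ffn(t,x')=\int_{\Fperp}\partial^\gamma_{x'}\fn(t,x'+x'')\,dx''$.

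\emph{Rapid decay.} Next, for a multi-index $\gamma$ and $k\ge0$, using $|x'|\le\|x\|_H$ and the Schwartz bound for $\partial^\gamma_{x'}\fn$ with exponent $k+m+1$, I would estimate
\[
  |x'|^k\,|\partial^\gamma_{x'}\fn(t,x'+x'')|
    \le M_{k+m+1,\gamma}(t)\,(1+\|x\|_H^2)^{-(k+m+1)/2}
    \le M_{k+m+1,\gamma}(t)\,(1+|x''|^2)^{-(m+1)/2},
\]
and the last expression is integrable over $\Fperp$ with integral independent of $x'$. Integrating and taking the supremum over $x'\in F$ gives $\sup_{x'}|x'|^k|\partial^\gamma_{x'}\ffn(t,x')|<\infty$ for all $k$ and $\gamma$, which is exactly the defining property of $\Sc(F)$.

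\emph{Main point.} There is no genuine obstacle; the only step requiring care is the choice of decay exponent in the Schwartz majorant of $\fn$ — namely one larger than $\dim\Fperp$ — so that a single bound simultaneously controls $|x'|^k|\partial^\gamma_{x'}\fn|$ uniformly in $x'$ and retains an integrable tail in the $\Fperp$ variable, the inequality $1+\|x\|_H^2\ge1+|x''|^2$ being what converts the former into the latter. All constants depend on $t$ and $N$, which is harmless here, since Theorem~\ref{t:schwartz} already asserts the Schwartz property of $\fn$ for each fixed $t>0$.
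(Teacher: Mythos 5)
Your proof is correct and is precisely the ``elementary computation'' the paper alludes to without writing out: differentiation under the integral sign plus the observation that a Schwartz majorant with decay exponent exceeding $\dim\Fperp$ simultaneously absorbs the polynomial weight in $x'$ and leaves an integrable tail in $x''$. Nothing further is needed.
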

\begin{example}\label{x:needschwartz}
  It is easy to construct a density $\phi\in C^\infty(\R^d)
  \cap L^\infty(\R^d)$ such that at least one of its marginals
  is not bounded. Indeed, take $d=2$ and let $\varphi\in C^\infty(\R^2)$
  such that $\varphi\geq0$, $\supp\varphi\subset(-\frac12,\frac12)^2$,
  $\|\varphi\|_{L^1}=1$, and
  $\|\varphi(0,\cdot)\|_{L^1} + \|\varphi(\cdot,0)\|_{L^1}>0$.
  Set for every $k\in\Z^2$,
  \[
    \phi_k(x) =
      \begin{cases}
        \varphi(k_1k_2x), \qquad& k_1\neq0,k_2\neq0,\\
        0, &\text{otherwise},
      \end{cases}
  \]
  and $\phi = \sum_k \phi_k(x-k)$. Then $\phi$ is a smooth,
  bounded density (up to re--normalization), but both marginals
  are unbounded.
\end{example}
We give a more quantitative version of the previous theorem by
giving the explicit dependence of the supremum norm of the
density with respect to time. This is necessary for the proof
of Proposition~\ref{p:bounded}. Similar bounds can be also
given for the derivatives of the densities, although we do
not need these estimates in the article.
\begin{theorem}\label{t:malliavin}
  Under assumptions \eqref{a:diagonal} and \eqref{a:nondegenerate},
  let $x_0\in H_N$ and let $\fn$ be
  the density of the solution $u^N$ of \eqref{e:galerkin} with initial
  condition $x_0$. Then for every $k\geq1$ there is $\alpha_k>0$
  such that for every $T>0$,
  \[
    \sup_{t\in(0,T),x\in H_N}\bigl((1\wedge t)^{\alpha_k}(1+|x|^k)\fn(x)\bigr)
      <\infty.
  \]
\end{theorem}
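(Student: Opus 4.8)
The plan is to run the Malliavin--calculus integration--by--parts representation of the density, as in \cite[Proposition~2.1.5 and Theorem~2.3.3]{Nua2006}, while keeping explicit track of the dependence on $t$. Theorem~\ref{t:schwartz} already supplies the structural facts: for every $t>0$ the random vector $u^N(t)$ lies in $\mathbb{D}^\infty$, with $\mathbb{D}^{j,p}$--norms bounded on $[0,T]$ (these come from \eqref{e:polymoment} and \eqref{e:expomoment}, exactly as in the proof of Theorem~\ref{t:schwartz}, since the Malliavin derivatives of $u^N$ solve linear equations), and the Malliavin matrix $\mathcal{M}_{N,t}$ is a.s.\ invertible with $(\det\mathcal{M}_{N,t})^{-1}\in\bigcap_p L^p$. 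The one extra ingredient I would need is the \emph{small--time} bound $\|(\det\mathcal{M}_{N,t})^{-1}\|_{L^p}\leq C_{p,T}(1\wedge t)^{-c_p}$ for $t\in(0,T]$, which is the standard quantitative form of the H\"ormander estimate for (hypo--)elliptic diffusions: one localizes on the time scale $\sqrt t$ and uses the uniform H\"ormander condition provided by \cite[Lemma~4.2]{Rom2004} under assumptions \eqref{a:diagonal} and \eqref{a:nondegenerate}, substituting \eqref{e:expomoment} for the usual boundedness of the coefficients (precisely the substitution already invoked for Theorem~\ref{t:schwartz}). Combining this with the $\mathbb{D}^\infty$--bounds and Meyer's inequalities for the Skorokhod integral, one gets that the Malliavin weight $H^{N,t}\eqdef H_{(1,\dots,d_N)}(u^N(t),1)$, $d_N=\dim H_N$ --- the $d_N$--fold nested Skorokhod integral built from $\mathcal{M}_{N,t}^{-1}$ and $Du^N(t)$ occurring in the density formula --- satisfies $\|H^{N,t}\|_{L^2}\leq C(1\wedge t)^{-\alpha}$ for some $\alpha=\alpha(N,\nu,T,\|x_0\|_H,\cov)>0$ (note that $\alpha$ does not depend on $k$).

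With this in hand the weighted bound is a routine computation. Identifying $H_N$ with $\R^{d_N}$ and writing $u^N(t)=(F^1,\dots,F^{d_N})$, the integration--by--parts formula gives, for every sign vector $\sigma\in\{-1,+1\}^{d_N}$,
\[
  \fn(t,x)
    = \Bigl(\prod_i\sigma_i\Bigr)\,
      \E\Bigl[\Bigl(\prod_i\uno_{\{\sigma_i(F^i-x^i)>0\}}\Bigr)\,H^{N,t}\Bigr],
\]
obtained by integrating one coordinate at a time from $\sigma_i\infty$. Fix $k\geq1$ and, given $x$, choose $\sigma_i=\operatorname{sign}(x^i)$; then on the event $\{\sigma_i(F^i-x^i)>0\}$ one has $|F^i|\geq|x^i|$, so $(1+|x^i|^k)\uno_{\{\sigma_i(F^i-x^i)>0\}}\leq 2(1+|F^i|^k)$ pointwise (the bound being trivial when $|x^i|\leq1$), and hence $(1+|x|^k)\prod_i\uno_{\{\sigma_i(F^i-x^i)>0\}}\leq C_{d_N,k}(1+\|u^N(t)\|_H^k)$. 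Therefore, by the Cauchy--Schwarz inequality,
\[
  (1+|x|^k)\,|\fn(t,x)|
    \leq C_{d_N,k}\,\bigl\|1+\|u^N(t)\|_H^k\bigr\|_{L^2}\,\|H^{N,t}\|_{L^2}
    \leq C_{d_N,k}\,C_T\,(1\wedge t)^{-\alpha},
\]
where $\bigl\|1+\|u^N(t)\|_H^k\bigr\|_{L^2}$ is bounded uniformly in $t\in(0,T]$ by \eqref{e:polymoment}. Taking $\alpha_k\eqdef\alpha$ then gives the claimed estimate, uniformly in $x\in H_N$ and $t\in(0,T]$.

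I expect the only real obstacle to be the first step, namely the polynomial--in--$t^{-1}$ control of $\|\mathcal{M}_{N,t}^{-1}\|_{L^p}$ as $t\downarrow0$ for our equation with unbounded (quadratic) coefficients; once one accepts the localization argument of \cite[Theorem~2.3.3]{Nua2006}, with \eqref{e:expomoment} playing the role of bounded coefficients and \cite[Lemma~4.2]{Rom2004} supplying the H\"ormander condition, the rest is the standard weighted integration--by--parts estimate sketched above. No uniformity in $N$ is needed here, which is why even crude bounds on the blow--up rate of $\|\mathcal{M}_{N,t}^{-1}\|_{L^p}$ suffice.
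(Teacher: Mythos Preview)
Your proposal is correct and follows essentially the same route as the paper's proof: both use the sign--flipped integration--by--parts representation of the density from \cite[Proposition~2.1.5]{Nua2006} to absorb the weight $(1+|x|^k)$ into $(1+\|u^N(t)\|_H^k)$, then apply Cauchy--Schwarz together with \eqref{e:polymoment}, and finally trace the small--time singularity of $\|H^{N,t}\|_{L^2}$ back to $\|(\det\mathcal{M}_{N,t})^{-1}\|_{L^p}$ via \cite[(2.32) and Theorem~2.3.3]{Nua2006}, with \eqref{e:expomoment} replacing bounded coefficients and \cite[Lemma~4.2]{Rom2004} supplying the H\"ormander condition. The paper is slightly more explicit about the mechanism of the time singularity (it spells out that $\epsilon_0\approx(1\wedge t)^\alpha$ in the Norris lemma), while you are more explicit about the sign--vector trick; your remark that $\alpha_k$ can in fact be taken independent of $k$ is also correct.
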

\begin{proof}
  By the previous theorem we know that $\fn$ is in the Schwartz space,
  so we only have to understand the singularity at $t=0$.
  By~\cite[Proposition 2.1.5]{Nua2006} we have the following formulas
  for the density,
  \[
    \fn(t,x)
      = \E\Bigl[\prod_{j=1}^{D_N}\uno_{\{u_{N,j}(t)>x_j\}}H_{N,t}\Bigr]
      = \E\Bigl[\prod_{j\neq i}\uno_{\{u_{N,j}(t)>x_j\}}
          \uno_{\{u_{N,i}(t)<x_i\}}H_{N,t}\Bigr],      
  \]
  for $x\in H_N$ and $t>0$,
  where $D_N=\dim H_N$, $H_{N,t}\in\mathbb{D}^\infty$ is a suitable
  random variable arising from integration by parts, and
  $(u_{N,j})_{j=1,\dots,D_N}$ are the components of $u^N(t)$ with
  respect to a basis of $H_N$. By the Cauchy--Schwartz inequality,
  the representation formulas above and \eqref{e:polymoment},
  \[ % e:ibp
    \begin{multlined}[.9\linewidth]
      (1 + |x|^k)\fn(x)
        \leq\mconst\E[(1 + \|u^N(t)\|_H^k)|H_t|]\leq\\
        \leq\mconst\E[(1 + \|u^N(t)\|_H^k)^2]^{\frac12}\E[|H_t|^2]^{\frac12}
        \leq\mconst\E[|H_t|^2]^{\frac12},
    \end{multlined}
  \]
  for every $k\geq 1$. By \cite[formula (2.32)]{Nua2006},
  given $p\geq1$ there are $\beta,\gamma>1$ and integers $n,m\geq1$
  such that
  \[
    \E[|H_t|^2]^{\frac1p}
      \leq \mconst\|\det\Mac_{N,t}^{-1}\|_{L^\beta(\Omega)}^m
        \|\Dc u^N(t)\|_{D_N,\gamma},
  \]
  where $\Dc u^N(t)$ and $\Mac_{N,t}$ are respectively the Malliavin
  derivative and the Malliavin matrix of $u^N(t)$, and
  $\|\cdot\|_{k,p}$, for $k\geq1$ and $p\geq1$ is the norm
  of the Malliavin--Sobolev space $\mathbb{D}^{k,p}$,
  see \cite[Section 1.2]{Nua2006}.
  By \cite[Theorem 2.2.2]{Nua2006}, together with our exponential bound
  \eqref{e:expomoment}, as in the proof of the previous theorem,
  we see that
  \[
    \sup_{[0,T]}\|\Dc u^N(t)\|_{D_N,\gamma}
      <\infty,
  \]
  and that the singularity at $t=0$ originates, not unexpectedly,
  from the inverse Malliavin matrix, or equivalently, from the
  smallest eigenvalue $\lambda_{N,t}$ of $\Mac_{N,t}$. Indeed,
  $|\det\Mac_{N,t}^{-1}|\leq \lambda_{N,t}^{-D_N}$, hence
  \[
    \|\det\Mac_{N,t}^{-1}\|_{L^\beta(\Omega)}^m
      \leq \E[\lambda_{N,t}^{-\beta D_N}]^{\frac{m}\beta}.
  \]
  To conclude the proof, we briefly go through the proof of
  \cite[Theorem 2.3.3]{Nua2006} and point out only, for brevity,
  the arguments where the dependence in time arises. To show the
  finiteness of the moments of $\lambda_{N,t}^{-1}$, we look for
  the tail probabilities $\Prob[\lambda_{N,t}\leq\epsilon]$
  at $0$. By \cite[Lemma 2.3.1]{Nua2006}
  \[
    \Prob[\lambda_{N,t}\leq\epsilon]
      \leq \mconst\epsilon^{-2D_N}\sup_{|v|=1}\Prob[v^T\Mac_{N,t}v\leq\epsilon]
        + \Prob[|\Mac_{N,t}|\geq\epsilon^{-1}].
  \]
  The second term on the right hand side is not the source of singularity
  in time and can be easily bounded since $\Mac_{N,t}$ has all the polynomial
  moments finite. The first term is bounded using the Norris lemma
  (\cite[Lemma 2.3.2]{Nua2006}). Indeed for every $q\geq2$ there is
  $\mcdef{cc:norris}>0$ such that
  \[
    \sup_{|v|=1}\Prob[v^T\Mac_{N,t}v\leq\epsilon]
      \leq \mcref{cc:norris}\epsilon^q,
  \]
  if $\epsilon\leq\epsilon_0$, for a suitable $\epsilon_0>0$.
  The value of $\epsilon_0$ is identified through the aforementioned
  Norris lemma, and here one has to look for the singularity in time.
  It is elementary to see that, just by going through the proof of the
  lemma, that $\epsilon_0\approx(1\wedge t)^\alpha$, for some $\alpha$
  that depends on $q$. By this it follows that
  \[
    \E[\lambda_{N,t}^{-\beta D_N}]
      \leq\mconst(1\wedge t)^{-\alpha\beta D_N},
  \]
  as needed.
\end{proof}
\bibliographystyle{amsalpha}

\end{document}